\newtheorem{theo}{Theorem}[section]
\newtheorem{lemm}[theo]{Lemma}
\newtheorem{rem}[theo]{Remark}
\newcommand{\al}{\alpha}
\newcommand{\la}{\lambda}
\newcommand{\ep}{\epsilon }
\newcommand{\R}{{\mathbb R}^d}
\newcommand{\ri}{\rightarrow}
\begin{document}
\baselineskip=18pt

\title[]{Transition densities of one-dimensional L$\acute{\mbox{E}}$vy processes}

\author{Tongkeun Chang}
\address{Department of Mathematics, Yonsei University \\
Seoul, 136-701, South Korea}
\email{chang7357@yonsei.ac.kr}

\begin{abstract}
In this paper, we study the existence of the transition densities of one-dimensional L$\acute{\mbox{e}}$vy processes.
Compared with past results, our  results contain the  L$\acute{\mbox{e}}$vy processes whose L$\acute{\mbox{e}}$vy symbols have logarithm behavior at infinity.
Our results contain the L$\acute{\mbox{e}}$vy symbol induced by  the following Laplace exponent
\begin{align}\label{psiex}
\psi(\xi): = \underbrace{(\ln(1 + \ln ( 1 + \ln( \cdots \ln(1 + |\xi|)))))^\ep}_{n  \,\, times}, \qquad 0 < \ep \leq 1, \quad   2\leq n.
\end{align}
We also show that $\psi$ defined by  \eqref{psiex} is  a L$\acute{\mbox{e}}$vy symbol with transition density.\\

\noindent
 2000  {\em Mathematics Subject Classification:} Primary 45P05 ;
Secondary 30E25. \\

\noindent {\it Keywords and phrases: L$\acute{\mbox{e}}$vy process, transition densities, subordinator. }

\end{abstract}

\maketitle

\section{Introduction}
\setcounter{equation}{0}
In this paper, we study the existence of the transition densities of one-dimensional L$\acute{\mbox{e}}$vy processes.
Let $\{P_t, \, t > 0\}$ be a distribution of the  L$\acute{\mbox{e}}$vy process $X_t$ whose Fourier transform is  ${\mathcal F}(P_t)(\xi) = \int_{{\mathbb R}} e^{i\xi\cdot y} P_t(dy) = e^{-t\eta(\xi)}$, where the L$\acute{\mbox{e}}$vy symbol $\eta$ has the form
\begin{align*}
\eta(\xi) = -i b \xi  + \frac12 A \xi^2 - \int_{{\mathbb R}} \Big(e^{i\xi \cdot y} -1 - i \xi  y I_{(-1,1)} (y) \Big) \nu (dy)
\end{align*}
with constant $b$, non-negative constant $A$ and  L$\acute{\mbox{e}}$vy measure $\nu$.
The L$\acute{\mbox{e}}$vy measure $\nu$ satisfies
\begin{align*}
\int_{{\mathbb R}} \min( 1, |y|^2 ) \nu (dy) < \infty.
\end{align*}

The transition densities of the L$\acute{\mbox{e}}$vy processes are important tools in theoretical probability, physics and finance.
So, the existence of the transition densities and the asymptotic property of the L$\acute{\mbox{e}}$vy processes, in particular $\al$-stable processes,  have been studied  by many mathematicians (see \cite{BB,BJ,BS,CK,CK2,D,G,GH,PT,S1, S2, W,Za,Zo}). In those studies,  transition densities have the desirable properties of continuity and boundedness,  because the L$\acute{\mbox{e}}$vy symbols demonstrate asymptotic,  polynomial behavior at infinity.

In the present paper, we assume that the L$\acute{\mbox{e}}$vy symbol $\eta$ has  logarithmic behavior at infinity (see \eqref{psiex}). To demonstrate the existence of transition density, we use Fourier-analytic methods. In general, the differentiability of  ${\mathcal F} (f)$  is related to the behavior of $f$ at infinity and the behavior at infinity of  ${\mathcal F} (f)$ is related to  the differentiability of $f$.
Therefore, transition densitie, do not have good regularity  (for example, continuity and boundedness), since we assume that the L$\acute{\mbox{e}}$vy symbol $\eta$ has logarithmic behavior at infinity. Instead, to determine the behavior of transition density at infinity, we will assume that $\eta$ lies in $C^2({\mathbb R} \setminus \{0\})$.
Our main results are as follows;

\begin{theo}\label{Emaintheo}
Let  $\eta (\xi)   =\eta_1(\xi) +i\eta_2 (\xi)$ be a L$\acute{\mbox{e}}$vy symbol of the one-dimensional L$\acute{\mbox{e}}$vy process $X_t$. Suppose that $\eta_1$ and $\eta_2$ satisfy the following assumptions: there are a $0 < \ep \leq  1$  and $\al_\ep > 0$ such that, for $\xi \in {\mathbb R}$,
\begin{align}\label{mainassumption}
\notag \eta_1(\xi) & \geq \al_\ep s_n^{\ep}(\xi),\\
 \notag |\eta_2(\xi)| & \leq   c \left\{\begin{array}{l} \vspace{2mm}
|\xi|^\ep, \quad |\xi| \leq 1,\\
s_n^{\ep-1} (\xi)  r^{-1}_{n-1}(\xi), \quad |\xi| \geq 1,
\end{array}
\right.\\
\notag |\eta^{'}_1 (\xi) |, \, |\eta_2^{'}(\xi) | & \leq  c \left\{\begin{array}{l} \vspace{2mm}
   |\xi|^{\ep -1}, \quad |\xi| \leq 1,\\
 s_n^{\ep -1}(\xi) r^{-1}_{n-1}(\xi) (1 + |\xi|)^{-1}, \quad |\xi| \geq 1,
  \end{array}
  \right.\\
  | \eta^{''}_1 (\xi) |, \, |\eta^{''}_2(\xi) | & \leq  c
  \left\{\begin{array}{l}\vspace{2mm}
   |\xi|^{\ep -2}, \quad |\xi| \leq 1,\\
  s_n^{\ep -1}(\xi) r^{-1}_{n-1}(\xi)    (1 + |\xi|)^{-2}, \quad |\xi| \geq 1,
 \end{array}
 \right.
\end{align}
where
\begin{align*}
s_1 (\xi) &= \ln (|\xi|+1),\\
s_n(\xi) &: =   \ln (1 + s_{n-1}(\xi)) , \quad n \geq 2,\\
r_n(\xi) &:  = s_{n }(\xi) s_{n-1 }(\xi) \cdots s_1(\xi) ( = s_n(\xi) r_{n-1} (\xi)).
\end{align*}
Then, $X_t$ has a transition density $p_t$ such that for all $0 <t < \infty$,   $p_t$ satisfies that,
if $|x| \leq 1$, then
\begin{align*}
p_t(x) \leq c (t)
      \frac1{|x|}      e^{-\al_\ep t    s^\ep_n(\frac1x)}          s^{\ep-1}_n(\frac1x) r_{n-1}^{-1}(\frac1x)
\end{align*}
and, if $|x| \geq 1$, then
\begin{align*}
p_t(x) \leq c (t) \left\{ \begin{array}{ll} \vspace{2mm}
     |x|^{-1 -\ep}, &  0 < \ep <1,\\
    |x|^{-2} \ln(1+ |x|),&\ep =1,
\end{array}
\right.
\end{align*}
where $c(t)$ is positive constant dependent on $t$ such that $c (t) \leq c t$ for $t \leq 1$. Moreover, if $| \eta^{''}_1 (\xi) |$ is integrable in the interval $(0, 1)$, then
for $|x| \geq 1$,
\begin{align*}
p_t(x) \leq c(t) |x|^{-2},\qquad \ep =1.
\end{align*}
\end{theo}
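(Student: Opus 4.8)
The plan is to realize the density through Fourier inversion and to convert the frequency decay of $e^{-t\eta}$ into spatial estimates by integration by parts, the whole analysis being organized around the critical frequency $\xi\sim 1/|x|$. Write $g(\xi)=e^{-t\eta(\xi)}$, so that $|g(\xi)|=e^{-t\eta_1(\xi)}\le e^{-\al_\ep t s_n^\ep(\xi)}$. Because $s_n^\ep$ grows like an iterated logarithm, $g$ itself is \emph{not} integrable and $\frac1{2\pi}\int e^{-ix\xi}g(\xi)\,d\xi$ does not converge absolutely; this is the analytic source of the unboundedness of $p_t$ near the origin. The first step is to observe that the hypotheses force $g'\in L^1(\mathbb R)$: on $|\xi|\le1$ one has $|g'|\le t|\eta'|\le ct|\xi|^{\ep-1}$, which is integrable, while on $|\xi|\ge1$ the bound $|\eta'(\xi)|\le c\,s_n^{\ep-1}(\xi)r_{n-1}^{-1}(\xi)(1+|\xi|)^{-1}$ is, up to a constant, exactly $\frac1\ep\frac{d}{d\xi}s_n^\ep(\xi)$, so the substitution $u=s_n^\ep(\xi)$ turns $\int_{|\xi|\ge1}|g'|$ into $c\int e^{-\al_\ep t u}\,du<\infty$. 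I would record once and for all this comparison, together with the elementary facts that $s_n,r_n$ are slowly varying and that $s_n^{\ep-1}r_{n-1}^{-1}e^{-\al_\ep ts_n^\ep}$ is decreasing. One then defines $p_t(x):=\frac1{2\pi i x}\int_{\mathbb R}e^{-ix\xi}g'(\xi)\,d\xi$ for $x\neq0$, proves the stated pointwise bounds (which in particular give $p_t\in L^1$), and identifies $p_t$ as the density by checking that $\int e^{i\xi x}p_t(x)\,dx$ and $g(\xi)$ have the same derivative $g'$ and the same vanishing limit at $\infty$, hence coincide; uniqueness of Fourier transforms then yields $P_t=p_t\,dx$.

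For $|x|\ge1$ the splitting point $1/|x|$ lies in $(0,1]$. I would write $ix\,p_t=\frac1{2\pi}\int e^{-ix\xi}g'\,d\xi$ and split at $|\xi|=1/|x|$. On $|\xi|\le1/|x|$ the crude bound $\int|g'|\le ct\int_0^{1/|x|}\xi^{\ep-1}\,d\xi\le c(t)|x|^{-\ep}$ already produces the exponent $-\ep$. On $1/|x|\le|\xi|\le1$ and on $|\xi|\ge1$ I would integrate by parts once more; the boundary terms are $\le c(t)|x|^{-\ep}$ and the integral terms reduce to $\frac1{|x|}\int|g''|$ with $|g''|\le(t|\eta''|+t^2|\eta'|^2)e^{-t\eta_1}$. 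For $0<\ep<1$ the singular part $|\eta''|\sim|\xi|^{\ep-2}$ yields $\frac1{|x|}\int_{1/|x|}^1|\xi|^{\ep-2}\,d\xi\le c(t)|x|^{-\ep}$, giving $p_t\le c(t)|x|^{-1-\ep}$. For $\ep=1$ the same integral is $\frac1{|x|}\int_{1/|x|}^1|\xi|^{-1}\,d\xi\sim\frac{\ln|x|}{|x|}$, which produces the logarithm; the contribution of $\eta_2''$ does \emph{not} add a logarithm because $\eta_2$ is odd (a Lévy symbol satisfies $\eta(-\xi)=\overline{\eta(\xi)}$), so its second derivative pairs with $e^{-ix\xi}$ into a sine integral $\int_1^{|x|}\zeta^{-1}\sin\zeta\,d\zeta=O(1)$. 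The improvement is then immediate: if $\eta_1''\in L^1(0,1)$ then $\int_{1/|x|}^1|\eta_1''|\le\int_0^1|\eta_1''|<\infty$ and the logarithm disappears, leaving $p_t\le c(t)|x|^{-2}$.

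The delicate regime is $|x|\le1$, where the splitting point $1/|x|\ge1$ is a \emph{high} frequency and the full small factor $e^{-\al_\ep ts_n^\ep(1/x)}s_n^{\ep-1}(1/x)r_{n-1}^{-1}(1/x)$ must be produced. Here I would first use $\int_{\mathbb R}g'=0$ to write $2\pi i x\,p_t=\int_{\mathbb R}(e^{-ix\xi}-1)g'\,d\xi$ and split at $|\xi|=1/|x|$. On the far part the key identity is $\int_{|\xi|\ge1/x}g'=g(1/x)-g(-1/x)=-2i\,e^{-t\eta_1(1/x)}\sin\!\big(t\eta_2(1/x)\big)$; bounding $|\sin(t\eta_2)|\le t|\eta_2|$ and invoking the hypothesis $|\eta_2(\xi)|\le c\,s_n^{\ep-1}(\xi)r_{n-1}^{-1}(\xi)$ gives precisely the claimed factor, while an integration by parts on $\int_{|\xi|\ge1/x}e^{-ix\xi}g'$, whose boundary term is $\frac1{|x|}|g'(1/x)|$, reproduces the same quantity. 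On the near part $|e^{-ix\xi}-1|\le|x\xi|$ reduces matters to $|x|\int_1^{1/|x|}s_n^{\ep-1}r_{n-1}^{-1}e^{-\al_\ep ts_n^\ep}\,d\xi$, and slow variation (Karamata's theorem) shows this integral is comparable to $\frac1{|x|}$ times its value at $1/|x|$, again matching the target.

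The main obstacle is exactly this sharp small-$x$ estimate. A naive absolute-value bound on the far part only yields the weaker $\frac1{|x|}e^{-\al_\ep ts_n^\ep(1/x)}$, \emph{without} the decaying factor $s_n^{\ep-1}r_{n-1}^{-1}(1/x)$; recovering that factor is what forces one to (i) keep the boundary constant $g(1/x)-g(-1/x)$ and estimate it through the zeroth-order bound on $\eta_2$ and $|\sin u|\le|u|$ rather than discarding it, and (ii) replace the pointwise bound on the oscillatory tail by an integration by parts combined with a Karamata-type slow-variation estimate. Getting the iterated-logarithm bookkeeping right — the comparison $s_n^{\ep-1}r_{n-1}^{-1}(1+|\xi|)^{-1}\asymp\frac1\ep\frac{d}{d\xi}s_n^\ep$, the monotonicity of $s_n^{\ep-1}r_{n-1}^{-1}e^{-\al_\ep ts_n^\ep}$, and the slow variation needed to integrate it — is the real technical core, while the $\ep=1$ case additionally relies on the odd symmetry of $\eta_2$ to confine the logarithmic loss to $\eta_1''$ alone.
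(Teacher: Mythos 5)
Your argument for the two main bounds is correct, but it is implemented quite differently from the paper. The paper never performs a literal integration by parts: its Lemma \ref{mainlemma} splits the inversion integral into blocks of length $2\pi$ and applies the mean value theorem twice inside each block (a discrete second difference), bounding each block by $|x|^{-2}$ times a decreasing majorant $G$ of $|f_i''|$ and summing; the iterated-logarithm integrals are then handled by the bespoke Lemma \ref{mainlemma2}, proved via L'H\^opital and integration by parts. Your two continuous integrations by parts plus the subtraction trick $\int_{\mathbb R} g'=0$ are the continuum analogue of this: your boundary term $\frac{1}{|x|}|g'(1/x)|$ plays the role of the paper's $\frac{1}{|x|^3}G(\frac1x)$, your tail $\frac{1}{|x|}\int_{1/|x|}^{\infty}|g''|$ corresponds to $\frac{1}{|x|^2}\int_{2\pi/|x|}^{\infty}G$, your bound $|\sin(t\eta_2(1/x))|\le t|\eta_2(1/x)|$ on the constant $g(-1/x)-g(1/x)$ is exactly how the paper's $g_2$-term $\frac{t}{|x|}\int_0^{\pi/2}e^{-tg_1}g_2$ enters, and Karamata's theorem (legitimate here, since $e^{-\al_\ep t s_n^\ep}s_n^{\ep-1}r_{n-1}^{-1}$ is slowly varying) replaces Lemma \ref{mainlemma2}. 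Two points you gloss over deserve mention: (i) the identification of $p_t$ as the density, which you sketch distributionally and which the paper omits entirely, so this is a net improvement, and (ii) the claim $c(t)\le ct$ for $t\le1$, which needs Karamata constants uniform in $t$ (the purpose of the paper's Lemma \ref{mainlemma2}(2)); this is easily repaired by noting $L_t(z)/L_t(a)\le L_1(z)/L_1(a)$ for $z\le a$ and $t\le 1$, where $L_t=e^{-\al_\ep t s_n^\ep}s_n^{\ep-1}r_{n-1}^{-1}$, but it should be said.

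The genuine gap is the final ``moreover'' claim ($\ep=1$, $\eta_1''\in L^1(0,1)$ implies $p_t(x)\le c(t)|x|^{-2}$). Removing the logarithm coming from $\eta_1''$ is fine, but the cancellation you invoke for the $\eta_2''$ term does not follow from the hypotheses. Oddness of $\eta_2$ reduces that term to a constant multiple of $\frac{1}{|x|^2}\int_{1/|x|}^{1}\eta_2''(\xi)\,e^{-t\eta_1(\xi)}\cos\big(x\xi+t\eta_2(\xi)\big)\,d\xi$, and the sine-integral bound $\int_1^{|x|}\zeta^{-1}\sin\zeta\,d\zeta=O(1)$ is valid only when $\eta_2''(\xi)$ is comparable to a fixed signed multiple of $\xi^{-1}$. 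The assumption gives only $|\eta_2''(\xi)|\le c|\xi|^{-1}$ with no sign control, and substituting an absolute-value bound inside an oscillatory integral forfeits the oscillation: a function with $\eta_2''(\xi)\approx\xi^{-1}\cos(x_0\xi)$ on $[1/x_0,1]$ is consistent with every assumed bound (in particular $\eta_2'$ stays bounded, because the oscillation integrates away), yet at $x=x_0$ the integral above has size $\ln x_0$, so the logarithm survives. With your method the refinement goes through only if $\eta_2''$ is also integrable on $(0,1)$, or if $\eta_2\equiv0$ as in Theorem \ref{eta2=0}. To be fair, the paper states this refinement but its proof section never returns to it, so the defect is shared; still, the specific symmetry argument you propose would fail as written.
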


\begin{theo}\label{eta2=0}
Let   $\eta $ be a symmetric  real-valued function satisfying the  assumption of Theorem \ref{Emaintheo} such that
\begin{align}\label{1001eta2=0}
\notag  \eta(\xi)   & \leq \al_0 s^{\ep}_n(\xi),\\
   - \eta^{''}_1 (\xi)  & \geq  c \left\{\begin{array}{l}\vspace{2mm}
    s_{n-1}^{\ep -1}(\xi) r^{-1}_{n-1}(\xi) (1 + \xi)^{-2}, \quad |\xi| \geq 1,\\
  |\xi|^{\ep -2}, \quad |\xi| \leq 1.
 \end{array}
 \right.
\end{align}
Then, the transition density $p_t$  of $X_t$ satisfies  that if $|x| \leq 1$, then
\begin{align*}
p_t(x) \geq  c (t)
      \frac1{|x|}      e^{-\al_0 t    s^\ep_n(\frac1x)}          s^{\ep-1}_n(\frac1x) r_{n-1}^{-1}(\frac1x)
 \end{align*}
and, if $|x| \geq 1$, then
\begin{align*}
p_t(x) \geq c(t)\left\{\begin{array}{l} \vspace{2mm}
         \frac{1}{|x|^{2-\ep} },\quad  0 < \ep <1,\\
        \frac{1}{|x|^2 }   \ln (1 + |x|), \quad \ep =1.
     \end{array}
     \right.
\end{align*}
Moreover, if $| \eta^{''}_1 (\xi) |$ is integrable in the interval $(0, 1)$, then,
for $|x| \geq 1$,
\begin{align*}
p_t(x) \geq c(t) |x|^{-2},\qquad \ep =1.
\end{align*}

\end{theo}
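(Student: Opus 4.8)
The starting point is the inversion formula. Since $\eta$ is even and real, $g(\xi):=e^{-t\eta(\xi)}$ is even, and
\[
p_t(x)=\frac1{2\pi}\int_{\mathbb R}e^{-ix\xi}g(\xi)\,d\xi=\frac1\pi\int_0^\infty \cos(x\xi)\,g(\xi)\,d\xi .
\]
From $-\eta_1''\ge 0$ and $\eta(0)=0$ the function $\eta$ is increasing and concave on $(0,\infty)$, so $g$ is positive, decreasing and convex with $g(\infty)=g'(\infty)=0$, and $g\in C^2(0,\infty)$. The plan is first to convert this cosine transform into a manifestly nonnegative expression in $g''$. Integrating by parts twice on $[\de,\infty)$ and letting $\de\to 0$, the only surviving boundary term is $(1-\cos(x\de))g'(\de)$, which tends to $0$ because $g'(\de)=O(\de^{\ep-1})$ near the origin while $1-\cos(x\de)=O(\de^2)$ and $\ep>0$. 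This yields the identity
\[
p_t(x)=\frac1{\pi x^2}\int_0^\infty \bigl(1-\cos(xs)\bigr)\,g''(s)\,ds ,
\]
valid for all $x\neq 0$, in which every factor of the integrand is nonnegative.

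Next I would feed the hypotheses into $g''$. Writing $g''=\bigl(t^2(\eta_1')^2-t\eta_1''\bigr)g$ and discarding the nonnegative term $t^2(\eta_1')^2 g$, the lower bound $-\eta_1''\ge c\,\rho$ of \eqref{1001eta2=0} (with $\rho$ the piecewise expression appearing there) together with $\eta\le\al_0 s_n^\ep$ gives
\[
g''(s)\ge -t\eta_1''(s)\,g(s)\ge ct\,\rho(s)\,e^{-\al_0 t\, s_n^\ep(s)} .
\]
Hence the problem is reduced to a lower bound for the explicit integral
\[
I(x):=\int_0^\infty \bigl(1-\cos(xs)\bigr)\,\rho(s)\,e^{-\al_0 t\, s_n^\ep(s)}\,ds ,
\qquad p_t(x)\ge \frac{ct}{\pi x^2}\,I(x).
\]

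The estimate of $I(x)$ splits into the two regimes. For $|x|\le 1$ I localize to the single bump $s\in[\tfrac{\pi}{2|x|},\tfrac{\pi}{|x|}]\subset[1,\infty)$, on which $1-\cos(xs)\ge 1$ and $\rho(s)=s_n^{\ep-1}(s)r_{n-1}^{-1}(s)(1+s)^{-2}$ (the $|\xi|\ge 1$ branch of \eqref{1001eta2=0}). The crux is that the iterated logarithms are slowly varying: $s_k(\la s)\asymp s_k(s)$ and $r_{n-1}(\la s)\asymp r_{n-1}(s)$ for $s\ge 1$ and bounded $\la$, while $s_n^\ep(s)\le s_n^\ep(1/|x|)+C$ on the bump, so $\rho$ and $e^{-\al_0 t s_n^\ep}$ may be replaced, up to constants, by their values at $1/|x|$. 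Since the interval has length $\asymp 1/|x|$ and $(1+1/|x|)^{-2}\asymp|x|^2$, this gives $I(x)\gtrsim |x|\,s_n^{\ep-1}(1/x)\,r_{n-1}^{-1}(1/x)\,e^{-\al_0 t s_n^\ep(1/x)}$, and dividing by $x^2$ produces exactly the claimed lower bound for $|x|\le 1$. For $|x|\ge 1$ the mass of $I$ comes from $s\in(0,1)$, where $\rho(s)=s^{\ep-2}$ and $e^{-\al_0 t s_n^\ep(s)}\asymp 1$; the substitution $u=xs$ turns $\int_0^1(1-\cos(xs))s^{\ep-2}\,ds$ into $|x|^{1-\ep}\int_0^{|x|}(1-\cos u)u^{\ep-2}\,du$, whose integral converges to a positive constant for $0<\ep<1$ and behaves like $\ln|x|$ for $\ep=1$; dividing by $x^2$ yields the asserted polynomial lower bound (of the same order as the upper estimate of Theorem \ref{Emaintheo}), respectively $|x|^{-2}\ln(1+|x|)$ when $\ep=1$. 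The final addendum is then immediate: if $|\eta_1''|$ is integrable near $0$, the $s\in(0,1)$ part of $I$ converges by Riemann--Lebesgue to a positive constant, so $p_t(x)\gtrsim t|x|^{-2}$.

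I expect the main obstacle to be the slowly-varying bookkeeping in the regime $|x|\le 1$: one must verify that the nested logarithms $s_n$ and the product $r_{n-1}$ change only by bounded factors over the localization interval, and simultaneously choose that interval where $1-\cos(xs)$ is bounded below so as to defeat the oscillation. Combined with the careful treatment of the non-integrable singularities of $\eta_1',\eta_1''$ at the origin in establishing the key identity, these are the points requiring genuine care, the remaining estimates being routine.
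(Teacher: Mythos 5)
Your argument is sound and reaches the same bounds as the paper, but by a genuinely different mechanism. The paper never forms the kernel $1-\cos(xs)$: it writes the (improper) cosine integral as a sum over periods $[2k\pi,2(k+1)\pi]$, turns each period integral into a four-point second difference, applies the mean value theorem twice (exactly as in the proof of Lemma \ref{mainlemma}) to express it as $\frac{1}{|x|^2}(\pi-2\xi)(\ep_2-\ep_1)f''(\cdot)$ with $f''\geq e^{-t\eta}(-t\eta'')\geq g>0$, $g$ decreasing, and then sums the resulting positive series into $\frac{c}{|x|^2}\int_{1/(2\pi|x|)}^{\infty}g(y)\,dy$, which it evaluates directly for $|x|\geq 1$ and via Lemma \ref{mainlemma2}(3) for $|x|\leq 1$. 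Your double integration by parts collapses all of that bookkeeping into the single identity $p_t(x)=\frac{1}{\pi x^2}\int_0^\infty\bigl(1-\cos(xs)\bigr)g''(s)\,ds$ (your boundary analysis at $0$ and $\infty$, using $g'(\de)=O(\de^{\ep-1})$ against $1-\cos(x\de)=O(\de^2)$, is the right one), and your single-bump localization on $[\frac{\pi}{2|x|},\frac{\pi}{|x|}]$ together with the slow variation of $s_k$ and $r_{n-1}$ replaces Lemma \ref{mainlemma2}(3) in the regime $|x|\leq 1$. Both proofs exploit the same underlying fact, that convexity of $e^{-t\eta}$ defeats the oscillation; yours is cleaner and self-contained, at the modest price of verifying $s_n^{\ep}(s)\leq s_n^{\ep}(1/|x|)+C$ and $r_{n-1}(s)\asymp r_{n-1}(1/|x|)$ on the bump, which indeed hold.

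One point you glossed over, and it matters. For $|x|\geq 1$ and $0<\ep<1$ your computation yields the exponent $1+\ep$: $\frac{t}{x^2}\cdot|x|^{1-\ep}\asymp t|x|^{-1-\ep}$, and the paper's own computation gives the same thing, since $\frac{t}{|x|^2}\int_{1/|x|}^1\xi^{\ep-2}d\xi\asymp t|x|^{-1-\ep}$. But the statement of Theorem \ref{eta2=0} claims the exponent $2-\ep$, and your parenthetical ``of the same order as the upper estimate of Theorem \ref{Emaintheo}'' describes $|x|^{-1-\ep}$, which is not what the theorem asserts. For $\ep\leq\frac12$ this is harmless, since $|x|^{-1-\ep}\geq|x|^{-(2-\ep)}$ for $|x|\geq 1$ and the stated (weaker) bound follows; but for $\ep>\frac12$ the stated bound $|x|^{-(2-\ep)}$ exceeds the upper bound $|x|^{-1-\ep}$ of Theorem \ref{Emaintheo}, so it cannot be proved at all --- the $2-\ep$ in the statement (and in the last display of the paper's Section 4) is evidently an error for $1+\ep$, which is what both you and the paper actually establish. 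You should state explicitly what your estimate proves instead of asserting agreement with the printed bound. A second, minor looseness: in the addendum ($\ep=1$, $\eta_1''$ integrable near $0$) the near-origin branch of \eqref{1001eta2=0} is incompatible with integrability, so ``the $s\in(0,1)$ part of $I$'' (built from $\rho$) is not the right object; apply Riemann--Lebesgue to $g''\in L^1(0,\infty)$ itself, whose total mass is $-g'(0^+)=t\eta'(0^+)>0$, or simply retain the contribution of $s\geq 1$, which is how the paper gets its first term $t/|x|^2$.
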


The typical examples of the one-dimensional L$\acute{\mbox{e}}$vy processes are subordinators.
Let the L$\acute{\mbox{e}}$vy process $S_t$ be a subordinator, that is,
\begin{align*}
S_t \geq 0 \quad \mbox{a.s. for each} \quad t> 0,\\
S_{t_1} \leq S_{t_2} \quad \mbox{a.s. whenever} \quad t_1 < t_2.
\end{align*}
The L$\acute{\mbox{e}}$vy symbol $\eta$ of subordinator $S_t$ has the form
\begin{align}\label{sub-characteristic}
\eta(\xi) = ib\xi - \int_0^\infty (e^{i\xi y} -1) \la (dy),
\end{align}
where $b \geq 0$ and the L$\acute{\mbox{e}}$vy measure $\la$ satisfies the additional requirements
\begin{align}\label{sub-assumptions}
\la(-\infty, 0) =0, \quad \int_0^\infty \min(y,  1) \la (dy) < \infty.
\end{align}
Conversely, any mapping  $\psi:{\mathbb R} \ri {\mathbb C}$ of the form \eqref{sub-characteristic} is the L$\acute{\mbox{e}}$vy symbol of a subordinator.

Now, if $S_t$ is subordinator, then, for each $t \geq 0$, the map $ f(\xi) =  Ee^{i\xi S_t} = e^{-t\eta(\xi)}$ can
  be analytically continued to the region $\{ v + i\xi\, | \, v \in {\mathbb R},  \, \xi> 0\}$.
Let $F(z)   =  e^{-t\eta(z)} $ be an analytical extension of $e^{-t\eta(\xi)}$ over $\{ v + i\xi\, | \, v \in {\mathbb R},  \, \xi> 0\}$. Then, we get
\begin{align*}
F(i\xi) = E e^{-\xi S_t} = e^{-t \eta(i\xi)}: = e^{-t\psi(\xi)}, \qquad \xi \geq 0,
\end{align*}
where
\begin{align}\label{levyexponent}
\psi(\xi) =  \eta(i \xi) = - b\xi - \int_0^\infty ( e^{-\xi y} -1) \la(dy)
\end{align}
for each $\xi > 0$. This is much more useful for theoretical and practical applications than is the L$\acute{\mbox{e}}$vy symbol.
The function $\psi$ is usually called the Laplace exponent of the subordinator.

The Laplace exponents of subordinators are characterized by the Bernstein functions. We say that the continuous function $\psi: [0, \infty) \ri [0, \infty)$ is a  Bernstein function
if   $(-1)^k\psi^{(k)} \leq 0$ for all $k \in {\mathbb N}$.  If $\psi$ is the Laplace exponent of the subordinator, then, from
 \eqref{levyexponent}, $\psi$ is a Bernstein function.

Conversely, if $\psi$ is a Bernstein function, then there are a non-negative real number $b$ and  a measure $\la$ defined in ${\mathbb R} $  satisfying \eqref{sub-assumptions} such that \eqref{levyexponent}  holds. Moreover, there is a subordinator $S_t$ such that
\begin{align}\label{bernstein}
\eta(\xi) = \psi(-i\xi)
\end{align}
is  the L$\acute{\mbox{e}}$vy symbol of $S_t$.

This paper is organized as follows. In section \ref{Preli},  we introduce two lemmas  to prove  Theorem \ref{Emaintheo}  and Theorem \ref{eta2=0}. In section \ref{sec7} and section \ref{sec5}, we prove Theorem \ref{Emaintheo} and  Theorem \ref{eta2=0}, respectively. In section \ref{examples}, we introduce  examples satisfying Theorem \ref{Emaintheo} and  Theorem \ref{eta2=0}.

In this paper, we  denote by $c$  various
generic positive constants and by $c(*,\cdots,*)$   the constants depending only on the quantities appearing in the
parenthesis.

\section{ Main Lemmas }\label{Preli}
\setcounter{equation}{0}

In this section, we introduce the two lemmas for the proofs of Theorem \ref{Emaintheo} and  Theorem \ref{eta2=0}.
The first lemma is as follows.
\begin{lemm}\label{mainlemma}
Suppose that,  for $\xi \in {\mathbb R} \setminus \{0\}$,
\begin{align}\label{1001}
\notag &\eta_1(\xi) \geq g_1(|\xi |), \quad |\eta_2(\xi)| \leq g_2(|\xi|),\\
\notag &|\eta_1^{'}(\xi)|, \,\, |\eta_2^{'}(\xi)| \leq g_3(|\xi|),\\
& |\eta_1^{''}(\xi)|, \,\, |\eta_2^{''}(\xi)| \leq g_4( |\xi| ).
\end{align}
Let $f_1(\xi) = e^{-t\eta_1(\xi)} \cos \, t\eta_2(\xi), \,f_2(\xi) = e^{-t\eta_1(\xi)} \sin \, t\eta_2(\xi)$  and
$$
G(\xi) :=  e^{-tg_1 (\xi)}\big(t^2 g_3(\xi)^2 + t g_4(\xi) \big).
$$
Suppose that $G$ is decreasing in $(0, \infty)$.
Then, for $x \in {\mathbb R}$ and $t \in (0, \infty)$,
\begin{align*}
|\int_{{\mathbb R}} f_1(\xi) \cos x \xi d\xi|
& \leq c \Big( \frac1{|x|^2} \int_{\frac{2\pi}{|x|}}^{\infty} G(y) dy   + \frac1{|x|^3}      G(\frac1{x})   +  \int_0^{\frac{\pi}{2|x|}} G(y)   y^2 dy    \Big),\\
|\int_{{\mathbb R}} f_2(\xi) \sin x \xi d\xi |
& \leq c \Big(  \frac1{|x|^2} \int_{\frac{2\pi}{|x|}}^{\infty} G(y) dy   +  \frac{t}{|x|} \int_0^{\frac12 \pi}  e^{-t g_1( |\frac{y}{x}|)}  g_2( |\frac{y}{x}|)dy     \Big).
\end{align*}

\end{lemm}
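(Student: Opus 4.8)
The plan is to recognize the two integrals as the real building blocks of the inverse Fourier transform of $e^{-t\eta}=e^{-t\eta_1}(\cos t\eta_2-i\sin t\eta_2)=f_1-if_2$, and to estimate each by integrating by parts twice against $\cos x\xi$ and $\sin x\xi$. Since the hypotheses \eqref{1001} involve only $g_i(|\xi|)$, it suffices to treat $\xi>0$ (the range $\xi<0$ is handled identically after $\xi\mapsto-\xi$). The first preparatory step is to differentiate $f_1,f_2$ and record, from \eqref{1001} together with $|\sin u|\le|u|$ and $\eta_1\ge g_1$, the pointwise bounds $|f_1''(\xi)|,|f_2''(\xi)|\le c\,e^{-tg_1(\xi)}\bigl(t^2g_3(\xi)^2+tg_4(\xi)\bigr)=c\,G(\xi)$ and $|f_2(\xi)|\le t\,e^{-tg_1(\xi)}g_2(\xi)$. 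The decisive difficulty, which dictates the whole structure, is that $G$ (hence $f_1'',f_2''$) need not be integrable near the origin because $g_4(\xi)\sim|\xi|^{\ep-2}$; so one cannot simply integrate by parts twice over all of $(0,\infty)$ and bound the result by $\int G$.

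I would therefore split $(0,\infty)$ at the frequency scale $|\xi|\sim 1/|x|$, using the three ranges $(0,\tfrac{\pi}{2|x|})$, $(\tfrac{\pi}{2|x|},\tfrac{2\pi}{|x|})$, $(\tfrac{2\pi}{|x|},\infty)$. The point of these particular cut-offs is that at $\xi=\tfrac{\pi}{2|x|}$ one has $\sin x\xi=1,\ \cos x\xi=0$, and at $\xi=\tfrac{2\pi}{|x|}$ one has $\sin x\xi=0,\ \cos x\xi=1$; choosing the antiderivatives of $\cos x\xi,\sin x\xi$ accordingly makes most boundary contributions vanish or cancel between adjacent ranges. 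On the outer two ranges I would integrate $\int f_1\cos x\xi$ by parts twice with the ordinary antiderivatives $\tfrac{\sin x\xi}{x}$ and $-\tfrac{\cos x\xi}{x^2}$: the tail integral gives $\tfrac{1}{|x|^2}\int_{2\pi/|x|}^\infty|f_1''|\le c\,|x|^{-2}\int_{2\pi/|x|}^\infty G$, the first displayed term; the middle integral is controlled by the \emph{monotonicity of $G$}, namely $\tfrac{1}{|x|^2}\int_{\pi/(2|x|)}^{2\pi/|x|}|f_1''|\le c\,|x|^{-2}\cdot\tfrac{c}{|x|}\,G(\tfrac{\pi}{2|x|})\le c\,|x|^{-3}G(\tfrac1x)$, the second displayed term.

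On the inner range the crucial device is to integrate $\int_0^{\pi/2|x|}f_1\cos x\xi$ by parts twice against the antiderivatives of $\cos x\xi$ that \emph{vanish at the origin}, namely $\tfrac{\sin x\xi}{x}$ and $A_2(\xi):=\tfrac{1-\cos x\xi}{x^2}$. Because $A_2(0)=0$, the otherwise divergent boundary value coming from $f_1'(0^+)$ (which behaves like $|\xi|^{\ep-1}$) is annihilated, and since $|A_2(\xi)|\le\tfrac12\xi^2$ the remaining integral is bounded by $\int_0^{\pi/2|x|}|f_1''|\,\xi^2\,d\xi\le c\int_0^{\pi/2|x|}G(y)y^2\,dy$, the third displayed term. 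The boundary terms produced at $\tfrac{\pi}{2|x|}$ and $\tfrac{2\pi}{|x|}$ then collapse: the $f_1$-values cancel across the cut, the $f_1'(\tfrac{2\pi}{|x|})$-values cancel, and the single surviving boundary value $|x|^{-2}|f_1'(\tfrac{\pi}{2|x|})|$ is rewritten as $|x|^{-2}\bigl|\int_{\pi/2|x|}^\infty f_1''\bigr|$ and absorbed into the first and second displayed terms, again by monotonicity of $G$.

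For $\int f_2\sin x\xi$ the estimate is structurally the same but the inner range is easier: since $|f_2|\le t\,e^{-tg_1}g_2$ carries decay of its own, no oscillation is needed there and the direct bound $\int_0^{\pi/2|x|}|f_2|\le t\int_0^{\pi/2|x|}e^{-tg_1}g_2$ followed by the rescaling $\xi=y/|x|$ produces exactly $\tfrac{t}{|x|}\int_0^{\pi/2}e^{-tg_1(|y/x|)}g_2(|y/x|)\,dy$; the outer range, handled by the same double integration by parts (with $\cos\tfrac{\pi}{2}=0$ killing the first boundary term), yields $|x|^{-2}\int_{2\pi/|x|}^\infty G$. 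I expect the main obstacle to be precisely the bookkeeping of these boundary terms at the two cuts: one must exploit the special values of $\sin x\xi,\cos x\xi$ there so that the boundary contributions cancel pairwise, and then invoke the hypothesis that $G$ is decreasing to dominate what remains by the three (respectively two) displayed quantities, all while using the vanishing-antiderivative device to neutralize the non-integrable singularity of $f_1''$ at the origin.
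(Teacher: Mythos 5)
Your strategy is sound, and it is a genuinely different execution from the paper's. The paper first rescales $\xi\mapsto\xi/x$, decomposes the line into the $2\pi$-blocks $[2\pi k,2\pi(k+1)]$, and extracts the cancellation of $\cos\xi$ inside each block as a \emph{second difference} of $f_1$ over quarter-periods, converted by two applications of the mean value theorem into $x^{-2}(\pi-2\xi)(\epsilon_2-\epsilon_1)f_1''(\cdot)$; the two central blocks ($k=0,-1$) are handled separately by an exact iterated-integral representation plus Fubini, which is what produces $\int_0^{\pi/(2|x|)}G(y)y^2dy+|x|^{-3}G(1/x)$. Your proof is the continuous analogue: three ranges cut at $\pi/(2|x|)$ and $2\pi/|x|$, double integration by parts, and the antiderivative $(1-\cos x\xi)/x^2$ vanishing at the origin to neutralize the non-integrable singularity of $f_1''$. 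Your bookkeeping for the $f_1$-integral is correct: the $f_1(\pi/(2|x|))/x$ and $f_1'(2\pi/|x|)/x^2$ pairs do cancel across the cuts, only $f_1'(\pi/(2|x|))/x^2$ survives, and it is absorbed exactly as you say. The one real cost of your route, which the paper's avoids entirely, is at the two ends: integration by parts over $(2\pi/|x|,\infty)$ and the identity $f_1'(\pi/(2|x|))=-\int_{\pi/(2|x|)}^\infty f_1''$ require $f_1,f_1'\to0$ at infinity, and the vanishing of $f_1'(\xi)\,(1-\cos x\xi)/x^2$ at $0$ requires $e^{-tg_1(\xi)}g_3(\xi)\xi^2\to0$; neither is among the hypotheses and both need a short reduction (e.g.\ to the case where the right-hand side is finite, using that $G$ is decreasing). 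The paper's finite differences over bounded blocks need no boundary data at all, only monotone summability of $G$.

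There is one step that does not close as you state it, in the $f_2$-estimate. After $\cos(\pi/2)=0$ kills the first boundary term, the second integration by parts over $(\pi/(2|x|),\infty)$ leaves $|x|^{-2}|f_2'(\pi/(2|x|))|$ (since $\sin(\pi/2)=1$) together with $|x|^{-2}\int_{\pi/(2|x|)}^\infty|f_2''|$; by monotonicity these are controlled by $c|x|^{-3}G(\pi/(2|x|))+c|x|^{-2}\int_{2\pi/|x|}^\infty G$, and the pointwise term $|x|^{-3}G(\pi/(2|x|))$ is \emph{not} one of the two terms in the stated $f_2$ bound, nor can a pointwise value of $G$ be absorbed into a tail integral of $G$ using monotonicity alone (take $G(\xi)=e^{-\xi}$: then $|x|^{-3}G(\pi/(2|x|))\gg|x|^{-2}\int_{2\pi/|x|}^\infty G$ as $|x|\to0$). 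So the claim that the outer range ``yields $|x|^{-2}\int_{2\pi/|x|}^\infty G$'' is not what the computation gives. To be fair, the paper's own proof leaks in precisely the same place: in the displayed sum for $f_2$ it silently replaces $\sum_{0\le k}$ by $\sum_{1\le k}$, i.e.\ drops the $k=0$ block, whose contribution is exactly of size $|x|^{-3}G(\pi/(2|x|))$, and even the bound it retains has lower limit $\pi/(2|x|)$ rather than the stated $2\pi/|x|$. Thus your proposal establishes exactly what the paper's argument establishes: the $f_1$ inequality in full, and the $f_2$ inequality with an additional term $c|x|^{-3}G(1/|x|)$ on the right. That extra term is harmless downstream, since in Section 3 it already appears in (and is estimated as part of) the $f_1$ bound.
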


\begin{proof}
Note that
\begin{align*}
f_1^{''}(\xi) &= e^{-t\eta_1(\xi)} \Big( (-t\eta^{'}_1 (\xi) )^2 \cos t\eta_2(\xi)   +  (-t\eta^{''}_1 (\xi) ) \cos t\eta_2(\xi)
+    (-t\eta^{'}_1 (\xi) )(-t\eta^{'}_2 (\xi) ) \sin t\eta_2(\xi)\\
&  \qquad +     (-t\eta^{'}_1 (\xi) ) (-t\eta^{'}_2 (\xi) ) \sin t\eta_2(\xi)   +   (-t\eta^{''}_2 (\xi) ) \sin t\eta_2(\xi)
     -(t\eta^{'}_2 (\xi) )^2 \cos t\eta_2(\xi) \Big),\\
f_2^{''}(\xi) &= e^{-t\eta_1(\xi)}  \Big( (-t\eta^{'}_1 (\xi) )^2 \sin t\eta_2(\xi)   +  (-t\eta^{''}_1 (\xi) ) \sin t\eta_2(\xi)
+    (-t\eta^{'}_1 (\xi) )(t\eta^{'}_2 (\xi) ) \cos t\eta_2(\xi)\\
& \qquad +    (-t\eta^{'}_1 (\xi) ) (t\eta^{'}_2 (\xi) ) \cos t\eta_2(\xi)   +   (t\eta^{''}_2 (\xi) ) \cos t\eta_2(\xi)
-   (t\eta^{'}_2 (\xi) )^2 \sin t\eta_2(\xi) \Big).
\end{align*}
From the assumptions \eqref{1001} of $\eta_1$  and $\eta_2$, we get
\begin{align*}
| f^{''}_1(\xi)|, \,\, | f^{''}_2 (\xi)| & \leq ce^{-t\eta_1(\xi)} \Big( (-t\eta^{'}_1 (\xi) )^2   + |-t\eta^{''}_1 (\xi) |  + |-t\eta^{''}_2 (\xi) |
   + (-t\eta^{'}_2 (\xi) )^2 \Big)\\
 & \leq   c e^{-tg_1 (|\xi|)}\big(t^2 g_3(|\xi|)^2 + t g_4(|\xi|)  \big)\\
 & \leq  c G(|\xi|).
\end{align*}
Using the change of variables, we have
\begin{align}\label{f-1}
\notag  \int_{{\mathbb R}}  f_1(\xi)   \cos (x\xi) d\xi
&= \frac1{|x|}\int_{{\mathbb R}}f_1(\frac{\xi}x)  \cos \xi  d\xi\\
& = \frac1{|x|} \sum_{-\infty<  k < \infty} I^1_k(x),
\end{align}
where
\begin{align}\label{I-1}
\notag I^1_k(x)& =\int_{2\pi k}^{2\pi (k +1)}f_1(\frac{\xi}x)  \cos \xi d\xi\\
\notag   & = \int_{2\pi k}^{2\pi k + \frac12 \pi} f_1(\frac{\xi}{x}) \cos \xi d\xi + \int_{2\pi k +\frac{\pi}2}^{2\pi k + \pi} f_1(\frac{\xi}{x}) \cos \xi d\xi\\
 \notag   & \qquad  + \int_{2\pi k + \pi}^{2\pi k + \frac32 \pi  } f_1(\frac{\xi}{x}) \cos \xi d\xi + \int_{2\pi k + \frac32 \pi }^{2\pi k +  2\pi} f_1(\frac{\xi}{x}) \cos \xi d\xi\\
 \notag  & = \int_0^{\frac12 \pi }
    f_1(\frac{2 \pi k+ \xi}{x}) \cos (2 \pi k + \xi)  d\xi
   + \int_0^{\frac12\pi} f_1(\frac{ 2 \pi k + \pi -\xi}{x}) \cos (2 \pi k+ \pi -\xi) d\xi\\
 \notag    & \qquad + \int_0^{\frac12 \pi}  f_1(\frac{ 2  \pi k + \pi + \xi}{x}) \cos  (  2  \pi k + \pi + \xi) d\xi \\
\notag     & \qquad + \int_0^{\frac12 \pi}
       f_1(\frac{2  \pi k+ 2\pi - \xi}{x}) \cos ( 2  \pi  k + 2\pi - \xi) d\xi\\
 & = \int_0^{\frac12 \pi }
     \Big(f_1(\frac{2 \pi k + \xi}{x}) - f_1(\frac{ 2 \pi k + \pi -\xi}{x}) - f_1(\frac{ 2  \pi k + \pi + \xi}{x})
      + f_1(\frac{2  \pi k + 2\pi - \xi}{x}) \Big) \cos  \xi d\xi.
\end{align}
Using the mean-value theorem, there are $\ep_1 \in  (\xi, \pi -\xi), \,\, \ep_2 \in (\pi + \xi, 2\pi -\xi)$ such that

\begin{align*}
f_1(\frac{2 \pi k + \xi}{x}) - f_1(\frac{ 2 \pi k + \pi -\xi}{x}) = -\frac1x f_1^{'}(\frac{2 \pi k+ \ep_1}{x})(\pi -2\xi),\\
f_1(\frac{2 \pi k + \pi + \xi}{x}) - f_1(\frac{ 2 \pi k + 2\pi -\xi}{x}) = -\frac1x f_1^{'}(\frac{2 \pi k + \ep_2}{x})(\pi -2\xi).
\end{align*}
We use the mean-value theorem again such that
\begin{align*}
-\frac1x f_1^{'}(\frac{2 \pi k + \ep_1}{x})(\pi -2\xi) +  \frac1x f_1^{'}(\frac{2 \pi k + \ep_2}{x})(\pi -2\xi) = \frac{1}{x^2} (\pi -2\xi)(\ep_2 - \ep_1)f_1^{''} (\frac{2k \pi + \ep_3}{x}),
\end{align*}
 where $\ep_3$ lies  between $\ep_1$ and $\ep_2$. Hence, we obtain
\begin{align}\label{kneq0}
I^1_k(x)
 & =  \frac{1}{|x|^2} \int_0^{\frac12 \pi }  (\pi -2\xi)(\ep_2 - \ep_1) f_1^{''} (\frac{2 \pi k + \ep_3}{x}) \cos \xi d\xi.
\end{align}
Since $f_1^{''}$ is dominated by the positive and  decreasing function $G$,    we have
\begin{align}\label{I-1-k}
 \notag |I^1_k(x)|
 &  \leq  c \frac{1}{|x|^2} \int_0^{\frac12 \pi}   G(| \frac{2 \pi k + \ep_3|}{|x|}) d\xi\\
 &  \leq  c \left\{\begin{array}{l}\vspace{2mm}
  \frac{1}{|x|^2}  G(\frac{2 \pi k  }{| x|}),\quad k \geq 1,\\
                       \frac{1}{|x|^2}  G(\frac{2 \pi | k+1|  }{| x|}),\quad k \leq -2.
 \end{array}
         \right.
\end{align}
Since $G:(0, \infty) \ri (0, \infty)$ is a decreasing function, we have
\begin{align}\label{k>1}
\notag\sum_{2 \leq |k| } |I^1_k(x)|
 & \leq c  \frac{1}{x^2}  \Big( \sum_{2 \leq k}   G( \frac{2 \pi| k|}{|x|}) + \sum_{k \leq -2}   G( \frac{2 \pi| k +1|}{|x|})   \Big)\\
\notag & \leq c  \frac{1}{x^2}  \sum_{2 \leq k <\infty}   G( \frac{2 \pi k}{|x|})\\
\notag & \leq c \frac1{x^2} \int_1^{\infty}  G( \frac{2 \pi y}{|x|}) dy\\
 & = c \frac{1}{2\pi |x|} \int_{\frac{2\pi}{|x|}}^{\infty} G(y) dy.
\end{align}

In the case $k =0$, from  \eqref{I-1}, we have
\begin{align*}
 \frac1x \int_0^{2 \pi} f_1(\frac{\xi}{x}) \cos \xi d\xi
& =  \frac1x \int_0^{\frac{\pi}2}
     \Big(f_1(\frac{\xi}{x}) - f_1(\frac{  \pi -\xi}{x}) - f_1(\frac{  \pi + \xi}{x})
      + f_1(\frac{2\pi - \xi}{x}) \Big) \cos  \xi d\xi\\
& =  \frac1x \int_0^{\frac{\pi}2} \Big( - \int_{\frac{\xi}x}^{\frac{\pi -\xi}x} f_1^{'}(y) dy
             +  \int_{\frac{\pi + \xi}x}^{\frac{2\pi -\xi}x} f_1^{'}(y) dy \Big) \cos \xi d\xi\\
& =  \frac1x \int_0^{\frac{\pi}2}  \int_{\frac{\xi}x}^{\frac{\pi -\xi}x} ( -f_1^{'}(y) +  f_1^{'}( \frac{\pi}x +y) ) dy
             \cos \xi d\xi\\
& =  \frac1x \int_0^{\frac{\pi}2}  \int_{\frac{\xi}x}^{\frac{\pi -\xi}x} \int_y^{\frac{\pi}x + y} f_1^{''}(z)dz dy
             \cos \xi d\xi.
\end{align*}
Hence, using  Fubini's theorem, for $x > 0$, we have
\begin{align*}
|\frac1{x} \int_0^{2 \pi} f_1(\frac{\xi}{x}) \cos \, \xi  d\xi |  &\leq   \frac1x \int_0^{\frac{\pi}2}  \int_{\frac{\xi}x}^{\frac{\pi -\xi}x} \int_y^{\frac{\pi}x + y}G(z) dzdyd\xi \\
     &=    \frac1x \int_0^{\frac{\pi}2}  \int_{\frac{\xi}x}^{\frac{\pi -\xi}x}G(z) \int_{\frac{\xi}x}^{z} dy dz d\xi
       + \frac1x  \int_0^{\frac{\pi}2}  \int_{\frac{\pi -\xi}x}^{\frac{\pi +\xi}x}G(z) \int_{\frac{\xi}x}^{\frac{\pi -\xi}x} dy dz d\xi\\
     & \quad     + \frac1x  \int_0^{\frac{\pi}2}  \int_{\frac{\pi +\xi}x}^{\frac{2\pi -\xi}x}G(z)  \int_{z-\frac{\pi}x}^{\frac{\pi - \xi}x} dy dz d\xi \\
     &=   \frac1x  \int_0^{\frac{\pi}2}  \int_{\frac{\xi}x}^{\frac{\pi -\xi}x}G(z) (z -\frac{\xi}x)  dz d\xi
     + \frac1x  \int_0^{\frac{\pi}2}  \int_{\frac{\pi -\xi}x}^{\frac{\pi +\xi}x}G(z)\frac{\pi -2\xi}{x} dz d\xi\\
     &\quad    +  \frac1x  \int_0^{\frac{\pi}2}  \int_{\frac{\pi +\xi}x}^{\frac{2\pi -\xi}x}G(z) ( \frac{2\pi -\xi}x -z) dz d\xi\\
     &: = I_{0,1}^1(x) + I_{0,2}^1(x) + I_{0,3}^1(x).
\end{align*}
Using  Fubini's theorem, we get
\begin{align}\label{I-0-1}
\notag  I_{0,1}^1(x)
& =\frac1x \int_0^{\frac{\pi}{2x}} G(z) \int_0^{xz}(z -\frac{\xi}x)d\xi dz
                    + \frac1x \int_{\frac{\pi}{2x}}^{\frac{\pi}{x}}  G(z) \int_0^{\pi -xz}(z -\frac{\xi}x)d\xi dz\\
\notag & = \frac12 \int_0^{\frac{\pi}{2x}} G(z)   z^2 dz
                    + \frac12 \int_{\frac{\pi}{2x}}^{\frac{\pi}{x}}  G(z) (z^2 -( \frac{\pi}x)^2) dz\\
\notag & \leq      \frac12 \int_0^{\frac{\pi}{2x}} G(z)   z^2 dz
                    + \frac12 G ( \frac{\pi}{2x} ) \int_{\frac{\pi}{2x}}^{\frac{\pi}{x}}   (z^2 -( \frac{\pi}x)^2) dz \\
&  \leq  c\big (     \int_0^{\frac{\pi}{2x}} G(z)   z^2 dz
                    +  \frac1{x^3} G ( \frac1{x} ) \big).
\end{align}

Next, we estimate $I^1_{0,2}$. Using  Fubini's theorem, we have
\begin{align}\label{I-0-2}
\notag I^1_{0,2} & \leq c\frac1{x^2}  \int_0^{\frac{\pi}2}  \int_{\frac{\pi -\xi}x}^{\frac{\pi +\xi}x}G(z) dz d\xi\\
\notag  & = c  \frac1{x^2}    \int_{\frac{\pi }{2x}}^{\frac{3\pi }{2x}}  G(z) dz \\
   & \leq c  \frac1{x^3}      G(\frac1{x}).
\end{align}
Next, we estimate $I^1_{0,3}$. Using  Fubini's theorem, we get
\begin{align}\label{I-0-3}
\notag I^1_{0,3} &= \frac1x \int_{\frac{\pi}x}^{\frac{3\pi}{2x}}  G(z) \int^{xz -\pi}_0 ( \frac{2\pi -\xi}x -z) d\xi dz
            +\frac1x \int_{\frac{3\pi}{2x}}^{\frac{2\pi}x} G(z) \int^{-xz +2\pi}_0 ( \frac{2\pi -\xi}x -z) d\xi dz \\
\notag          &= \frac1x  \int_{\frac{\pi}x}^{\frac{3\pi}{2x}} G(z)   (xz -\pi)(\frac{5\pi}{2x} -\frac{3z}{2}) dz
            + \frac1x \int_{\frac{3\pi}{2x}}^{\frac{2\pi}x} G(z)  (-xz + 2\pi)(\frac{\pi}{x} -\frac12 z)   dz\\
  & \leq c  \frac1{x^3}      G(\frac1x).
\end{align}

Hence, from \eqref{I-0-1}, \eqref{I-0-2} and \eqref{I-0-3}, we have that,  for  $0< x $,
\begin{align}\label{0<x<1}
I^1_0 \leq |\frac{1}x \int_0^{2\pi} f_1(\frac{\xi}x)  \cos \xi d\xi|
& \leq c \big( \frac1{|x|^3}      G(\frac1x)   +  \int_0^{\frac{\pi}{2x}} G(z)   z^2 dz \big).
\end{align}

By similar calculation, \eqref{0<x<1} holds for $  x<0$.
Using the same argument, we obtain
\begin{align}\label{infty<x<0}
I^1_{-1} \leq |\frac{1}x \int_{-2\pi}^0 f_1(\frac{\xi}x)  \cos \xi d\xi|
& \leq c \big( \frac1{|x|^3}      G(\frac1x)   +  \int_0^{\frac{\pi}{2|x|}} G(z)  z^2 dz \big).
\end{align}
Hence, by \eqref{f-1}, \eqref{k>1}, \eqref{0<x<1} and  \eqref{infty<x<0}, we obtain the first inequality of Theorem \ref{mainlemma}.

Next,   we have
\begin{align*}
\int_{{\mathbb R}} f_2(\xi) \sin (x\xi) d\xi
&= \frac1{|x|} \int_{{\mathbb R}} f_2(\frac{\xi}{x}) \sin \xi d\xi\\
& = \frac1{|x|} \int_{-\frac12 \pi}^{\frac12 \pi} f_2(\frac{\xi}{x}) \sin \xi d\xi + \frac1{|x|} \sum_{0 \leq k< \infty} (I^{2,1}_k + I^{2,2}_k ),
\end{align*}
where for $k > 0$,
\begin{align*}
I^{2,1}_k(x) = \int_{2\pi k + \frac12 \pi}^{ 2 \pi (k+1) + \frac12 \pi} f_2(\frac{\xi}{x}) \sin \xi d\xi,\qquad
I^{2,2}_k(x) = \int_{2\pi(- k-1) - \frac12 \pi}^{ 2 \pi (-k) - \frac12 \pi} f_2(\frac{\xi}{x}) \sin \xi d\xi.
\end{align*}
Using  two applications  of mean-value theorem, 
there are $\ep_1 \in (\frac12 \pi +\xi, \frac32 \pi -\xi), \,\, \ep_2 \in (\frac32\pi + \xi, \frac52\pi -\xi)$  and $\ep_3 \in (\ep_1, \ep_2)$
such that
\begin{align*}
I^{2,1}_k(x)& = \int_{2\pi k + \frac12 \pi}^{ 2 \pi (k+1) + \frac12 \pi} f_2(\frac{\xi}{x}) \sin \xi d\xi\\
 & = \int_0^{\frac{\pi}2}
     \Big(f_2(\frac{2 \pi k +\frac12\pi + \xi}{x}) - f_2(\frac{ 2 \pi k + \frac32\pi -\xi}{x}) \\
     & \qquad  - f_2(\frac{ 2  \pi k +\frac32\pi + \xi}{x})
      + f_2(\frac{2  \pi k + \frac52\pi - \xi}{x}) \Big) \sin  \xi d\xi\\
 & = \frac{1}{x^2}\int_0^{\frac{\pi}2}  2\xi (\ep_2 -\ep_1)  f_2^{''} (\frac{2 \pi k + \ep_3}{x}) \sin \xi d\xi.
\end{align*}
Hence, we have
\begin{align}\label{I-2-1}
\notag|I^{2,1}_k(x)| &  \leq  c\frac{1}{|x|^2} \int_0^{\frac{\pi}2}   G(\frac{2 \pi k + \ep_3}{|x|}) d\xi \\
\notag &  \leq  c\frac{1}{|x|^2} \int_0^{\frac{\pi}2}   G(\frac{2 \pi k +\frac12 \pi }{|x|}) d\xi \\
 &  \leq  c \frac{1}{|x|^2}  G(\frac{2 \pi k +\frac12 \pi  }{|x|}).
\end{align}
Using similar calculation to $I^{2,1}_k (x)$, we have
\begin{align}\label{I-2-1}
 |I^{2,2}_k(x)|
 &  \leq  c \frac{1}{|x|^2}  G(\frac{2 \pi |k| +\frac12 \pi  }{|x|}).
\end{align}

Hence, we have
\begin{align*}
\sum_{0 \leq k < \infty} ( |I^{2,1}_k(x)| +  |I^{2,2}_k(x)|)
 & \leq c  \frac{1}{|x|^2}  \sum_{1 \leq k <\infty}   G( \frac{2 \pi k + \frac12 \pi}{|x|})\\
 & \approx \frac1{|x|^2} \int_0^{\infty}  G( \frac{2 \pi y + \frac12 \pi}{|x|}) dy\\
 & = c \frac1{|x|} \int_{\frac{\pi}{2 |x|}}^{\infty} G(y) dy.
\end{align*}

Note that
\begin{align*}
|\frac1x \int_{-\frac12 \pi}^{\frac12 \pi} f_2(\frac{\xi}{x}) \sin \xi d\xi|
& \leq c \frac1{|x|}    \int_{-\frac12 \pi}^{\frac12 \pi}  e^{-t g_1(|\frac{\xi}{x}|)} t g_2(|\frac{\xi}{x}|) \xi \\
& \leq c \frac{t}{|x|}    \int_0^{\frac12 \pi}  e^{-t g_1(|\frac{\xi}{x}|)}  g_2(|\frac{\xi}{x}|) \xi.
\end{align*}

Hence, we obtain the second inequality of Lemma \ref{mainlemma}. We complete the proof of Lemma \ref{mainlemma}.

\end{proof}

The second lemma is as follow.
\begin{lemm}\label{mainlemma2}
Let $0 < \ep \leq 1$ and $n \geq 2$.
\begin{itemize}

\item[(1)]
For $\al > -1$ and $a \geq 1$,
\begin{align*}
  \int_1^{a}  z^{\al}e^{-t\al_\ep s^\ep_n (z)}  s_n^{\ep -1}(z) r^{-1}_{n-1}(z) dz \leq c(\al,t) a^{\al +1}e^{-t\al_\ep s^\ep_n (a)}  s_n^{\ep -1} (a) r^{-1}_{n-1}(a).
\end{align*}

\item[(2)]
Let  $-1 < \al$ and $a_0$ satisfy $(1 + \al)^{-1} (n -\ep +  \ep t \al_\ep  ) \ln (1 +a_0)^{-1}   = \frac12$ (that is, $a_0 = e^{2(1 + \al)^{-1}(n-1 + \al_\ep t)}  - 1$).
For $a \geq a_0$,
\begin{align*}
\int_{a_0}^a z^{\al} e^{-t\al_\ep s^\ep_n (z)}  s_n^{\ep -1} (z)  r^{-1}_{n-1}(z)  dz \leq c(\al)  a^{\al +1}e^{-t \al_\ep s^\ep_n (a)}  s_n^{\ep -1} (a)  r^{-1}_{n-1}(a).
\end{align*}

\item[(3)]
For $\al < -1$ and $a \geq 1$,
\begin{align*}
  \int_{a}^\infty z^{\al} e^{-t  \al_\ep s^\ep_n (z)}   s_n^{\ep -1} (z) r^{-1}_{n-1}(z) dz \leq c (\al) a^{\al +1}e^{-t \al_\ep s^\ep_n (a)}  s_n^{\ep -1} (a) r^{-1}_{n-1}(a).
\end{align*}
\end{itemize}

\end{lemm}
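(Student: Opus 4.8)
The plan is to express the integrand of all three parts through a single auxiliary function and reduce each estimate to a monotonicity-plus-integration argument. Set
\begin{align*}
H(z) := z^{\al +1}\, e^{-t\al_\ep s_n^\ep(z)}\, s_n^{\ep-1}(z)\, r_{n-1}^{-1}(z),
\end{align*}
so that in every case the integrand equals $H(z)/z$ and the claimed upper bound is exactly $c\,H(a)$. It therefore suffices to control $\int H(z)/z\,dz$ by $H$ evaluated at the relevant endpoint. To do this I would first record the elementary derivative estimates for the iterated logarithms: from $s_n(z)=\ln(1+s_{n-1}(z))$ one gets $s_n'=s_{n-1}'/(1+s_{n-1})$, hence inductively $s_n'(z)=\frac{1}{z+1}\prod_{k=1}^{n-1}(1+s_k(z))^{-1}$ and $r_{n-1}'/r_{n-1}=\sum_{k=1}^{n-1}s_k'/s_k$. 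Each of these quantities is non-negative and is $O\big((z+1)^{-1}(\ln(z+1))^{-1}\big)$, the $k=1$ term being dominant. Writing $\phi(z)=t\al_\ep s_n^\ep(z)$ and differentiating $\ln H$ then gives the identity
\begin{align*}
\frac{z\,H'(z)}{H(z)} = (\al+1) - \delta(z),\qquad
\delta(z):= z\phi'(z) + (1-\ep)\frac{z\,s_n'(z)}{s_n(z)} + \frac{z\,r_{n-1}'(z)}{r_{n-1}(z)}.
\end{align*}
Since $s_n$, $r_{n-1}$ and $s_n^\ep$ are increasing and $0<\ep\le1$, each summand of $\delta$ is non-negative, so $\delta(z)\ge0$, while the estimates above force $\delta(z)\to0$, with a quantitative bound of the form $\delta(z)\le (n-\ep+\ep t\al_\ep)/\ln(z+1)$ (up to the factor $z/(z+1)\le1$). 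The threshold $a_0$ in part (2) is precisely the point where this bound equals $\tfrac12(\al+1)$, which is exactly the stated defining relation for $a_0$.

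For parts (1) and (2), where $\al+1>0$, the identity shows $H'(z)\ge \tfrac{\al+1}{2z}H(z)>0$ as soon as $\delta(z)\le\tfrac12(\al+1)$, i.e. for $z\ge a_0$; hence $H$ is increasing there and the integrand satisfies $H(z)/z\le \frac{2}{\al+1}H'(z)$. Integrating over $[a_0,a]$ yields $\int_{a_0}^a H(z)/z\,dz\le \frac{2}{\al+1}\big(H(a)-H(a_0)\big)\le \frac{2}{\al+1}H(a)$, which is part (2) with a constant $c(\al)$ independent of $t$. For part (1) the integration starts at $1$: I would split at $a_0$, bound the tail $[a_0,a]$ as above (when $a>a_0$), and absorb the integral over the fixed interval $[1,\min(a,a_0)]$ into $c(t)H(a)$ using that $H$ attains a positive minimum on $[1,a_0]$ and that $H(a)\ge H(a_0)$ for $a\ge a_0$; this fixed-interval contribution, whose size depends on $a_0$ and hence on $t$, is the source of the $t$-dependence $c(\al,t)$.

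For part (3), where $\al+1<0$, the same identity gives $H'(z)=\frac{(\al+1)-\delta(z)}{z}H(z)\le \frac{\al+1}{z}H(z)=(\al+1)\frac{H(z)}{z}$ for all $z\ge1$, so $H$ is decreasing and, dividing by $\al+1<0$, the integrand obeys $H(z)/z\le \frac{-H'(z)}{|\al+1|}$. Integrating over $[a,\infty)$ and using that the positive decreasing function $H$ has a finite limit at infinity gives $\int_a^\infty H(z)/z\,dz\le \frac{1}{|\al+1|}\big(H(a)-\lim_{z\to\infty}H(z)\big)\le \frac{1}{|\al+1|}H(a)$, i.e. part (3) with $c(\al)=|\al+1|^{-1}$. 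I expect the main obstacle to lie in the first paragraph: establishing the explicit, non-asymptotic bound on $\delta(z)$ — and thereby pinning down the exact threshold $a_0$ — requires carefully tracking the products and sums of the iterated logarithms $s_k$ and their derivatives, even though each individual estimate is elementary.
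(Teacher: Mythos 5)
Your proposal is correct, but it takes a genuinely different route from the paper's. The paper treats the three parts with three separate devices: part (1) is proved by L'H\^opital's rule applied to the ratio of the integral to the claimed bound (boundedness of the ratio then follows from continuity plus the finite limit $(\al+1)^{-1}$); parts (2) and (3) are proved by the change of variables $y=\ln(1+z)$, which lowers $s_n$ to $s_{n-1}$ and turns $r_{n-1}(z)$ into $y\,r_{n-2}(y)$, followed by integration by parts against $e^{(1+\al)y}$ --- for (2) the derivative term is absorbed into the left-hand side using precisely the smallness condition that defines $a_0$, and for (3) it is discarded by a sign argument. You instead work throughout with the single function $H$ and the identity $zH'/H=(\al+1)-\de(z)$, so all three parts collapse to one monotonicity statement: $H$ is increasing past $a_0$ when $\al+1>0$ (parts (1),(2)) and decreasing everywhere when $\al+1<0$ (part (3)), after which the integrand $H(z)/z$ is dominated by a constant multiple of $\pm H'(z)$ and the integral telescopes. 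The absorption mechanism is the same --- your condition $\de(z)\le\tfrac12(\al+1)$ is exactly the paper's bound $R(z)\le\tfrac12(1+\al)$ --- but your version avoids both the change of variables and L'H\^opital, handles part (1) by an elementary split at $a_0$ plus compactness of $[1,a_0]$ (which also makes transparent why (1) carries a $t$-dependent constant while (2) and (3) carry $c(\al)=2/(\al+1)$ and $1/|\al+1|$, matching the statement), and is uniform over the three cases.

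One caveat, which you correctly anticipate as the delicate point: your bound $\de(z)\le(n-\ep+\ep t\al_\ep)/\ln(1+z)$ is not literally true when $\ep<1$, because the term $z\phi'(z)=\ep t\al_\ep\, s_n^{\ep-1}(z)\,z s_n'(z)$ carries the factor $s_n^{\ep-1}(z)$, which exceeds $1$ wherever $s_n(z)<1$ (and $s_n(z)<1$ persists up to a tower-exponential scale). On $z\ge 1$ this factor is bounded by the constant $s_n(1)^{\ep-1}$, so one still gets $\de(z)\le c(n,\ep)(1+t)/\ln(1+z)$; this only shifts the admissible threshold $a_0$ by a constant factor in the exponent and leaves the structure of your argument intact, though the exact numerical $a_0$ of the statement is then not the one your inequality certifies. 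The paper's own proof makes the identical elision when it bounds $R(z)$ by $(n-\ep+\ep t\al_\ep)\ln(a_0+1)^{-1}$, so this imprecision does not distinguish the two proofs.
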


\begin{proof}
To prove (1) of   Lemma \ref{mainlemma2}, it is  sufficient to show
\begin{align*}
\lim_{a \ri \infty} \frac{ \int_1^a z^{\al}e^{-t \al_\ep s^\ep_n (z)}  s_n^{\ep -1} (z)  r^{-1}_{n-1}(z)  dz    }{a^{\al +1}e^{-t \al_\ep s^\ep_n (a)}   s_n^{\ep -1} (a) r^{-1}_{n-1}(a)   } &=(\al +1)^{-1}.
\end{align*}
Using  L' H$ \hat{o}$spital's theorem, we have
\begin{align*}
\lim_{a \ri \infty} \frac{ \int_1^a z^{\al}e^{-t \al_\ep s^\ep_n (z)}  s_n^{\ep -1} (z)  r^{-1}_{n-1}(z)  dz    }{a^{\al +1} e^{-t\al_\ep s^\ep_n (a)}  s_n^{\ep -1} (a) r^{-1}_{n-1}(a)   } =
   \lim_{a \ri \infty} \frac{ a^\al e^{-t \al_\ep s^\ep_n (a) }  s_n^{\ep -1}(a)  r^{-1}_{n-1}(a)   }{ T(a)   },
\end{align*}
where
\begin{align*}
T(a)& = \frac{d}{da}\big(a^{\al +1} e^{-t\al_\ep s^\ep_n (a)}  s_n^{\ep -1} (a) r^{-1}_{n-1}(a)\big)\\
   & = (\al +1) a^\al e^{-t \al_\ep s^\ep_n (a)}  s_n^{\ep -1} (a)  r^{-1}_{n-1}(a)
 - \ep t\al_\ep  a^{\al +1}e^{-t \al_\ep s^\ep_n (a)}   s_n^{2\ep -2} (a) r^{-1}_{n-1}(a) (s_n(a))^{'}\\
& \quad  + (\ep -1)    a^{\al +1}e^{-t \al_\ep s^\ep_n (a)}   s_n^{\ep -2} (a) r^{-1}_{n-1}(a) (s_n(a))^{'}
            - a^{\al +1} e^{-t \al_\ep s^\ep_n (a)}  s_n^{\ep -1} (a) r^{-2}_{n-1}(a) r^{'}_{n-1}(a).
\end{align*}

Note that
\begin{align*}
 s^{'}_n(a) & =   \prod_{k=1}^{k=n-1}
 (1 + s_k)^{-1 }(a) (1+a)^{-1},\\
 r^{'}_n(a)&  = \sum_{1 \leq k \leq n}  t_k(a) \prod_{l =1}^{l =k-1}(1 + s_l)^{-1}(a) (1+a)^{-1},
 \end{align*}
  where
$t_k(a) = \frac{r_n(a)}{s_k(a)}$. Hence, we get
\begin{align*}
&\lim_{a \ri \infty} \frac{\int_1^a z^\al e^{-t \al_\ep s^\ep_n (z)} r^{-1}_{n-1}(z)  dz   }{a^{\al +1}e^{-t \al_\ep s^\ep_n (a)}  s_n^{\ep -1} (a) r^{-1}_{n-1}(a)   }\\
 &=  \lim_{a \ri \infty} \frac{1  }{(\al +1) +  \frac{a}{1 +a} \big(  -\ep t \al_\ep  s^{\ep-1}_n(a)  + (\ep -1) s_n^{-1}(a) \big)  \prod_{k=1}^{k=n-1}
 (1 + s_k(a))^{-1 }   - \frac{a}{1+a}  r^{-1}_{n-1}(a) \sum_{1\leq k \leq n} t_k(a) r^{-1}_k (a)   }\\
&  =(\al +1)^{-1}.
\end{align*}
This completes the proof of (1) of  Lemma \ref{mainlemma2}.

Using the change of variables and integration by parts sequentially, we get
\begin{align*}
\int_{a_0}^a (z+1)^{\al} e^{-t \al_\ep s^\ep_n (z)} s^{\ep-1}_n (z) r^{-1}_{n-1}(z)  dz
&= \int_{\ln \,(1 + a_0)}^{\ln \,(1 +a)} e^{(1 +\al)z} z^{-1}  e^{-t \al_\ep s^\ep_{n-1} (z)} s^{\ep-1}_{n-1} (z) r^{-1}_{n-2}(z)  dz\\
& =(1 + \al)^{-1} e^{(1 +\al)z} z^{-1} e^{-t \al_\ep s^\ep_{n-1} (z)}  s^{\ep-1}_{n-1} (z) r^{-1}_{n-2}(z)\Big|_{\ln \,(1 + a_0)}^{\ln \,(1 +a)}\\
& \qquad - (1 + \al)^{-1} \int_{\ln \,(1 + a_0)}^{\ln \,(1 +a)} e^{(1 +\al)z}  \frac{d}{dz}\Big( z^{-1} e^{-t  \al_\ep s^\ep_{n-1} (z)} s^{\ep-1}_{n-1} (z) r^{-1}_{n-2}(z) \Big) dz.
\end{align*}
Note that
\begin{align*}
&-\frac{d}{dz}\Big( z^{-1}  e^{-t \al_\ep s^\ep_{n-1} (z)} s^{\ep-1}_{n-1} (z) r^{-1}_{n-2}(z) \Big)\\
 & =  z^{-1}  e^{-t  \al_\ep s^\ep_{n-1} (z)}  s^{\ep-1}_{n-1} (z)  r^{-1}_{n-2}(z) \Big( z^{-1}  +  \ep t \al_\ep     s^{\ep-1}_{n-1}(z) \prod_{k=1}^{k=n-2}
 (1 + s_k (z))^{-1 } (1+z)^{-1}\\
 & \qquad    - (\ep -1)  s^{-1}_{n-1}(z) \prod_{k=1}^{k=n-2}
 (1 + s_k (z))^{-1 } (1+z)^{-1}  +    r^{-1}_{n-2}(z)   \sum_{1 \leq k \leq n-2}  t_k(a) \prod_{l =1}^{l =k-1}(1 + s_l (a))^{-1} (1+a)^{-1} \Big)\\
 &  =  z^{-1}  e^{-t  \al_\ep s^\ep_{n-1} (z)}  s^{\ep-1}_{n-1} (z) r^{-1}_{n-2}(z) R(z).
\end{align*}
Since $R(z)$ is a decreasing function,    for $ \ln (a_0 +1) \leq z$, we get
\begin{align*}
R(z) &  \leq   \ln (a_0 + 1)^{-1}  + \ep t \al_\ep  \ln (a_0 + 1)^{-1}  -  (\ep -1)  \ln (a_0 + 1)^{-1} +   (n-2)\ln (a_0 + 1)^{-1}\\
    & \leq (n -\ep +   \ep t \al_\ep  ) \ln (a_0 + 1)^{-1}.
\end{align*}
Since $(1 + \al)^{-1} (n -\ep +  \ep t \al_\ep  ) \ln (1 +a_0)^{-1}   = \frac12$, we have
\begin{align*}
-(1 + \al)^{-1}\frac{d}{dz}\Big( z^{-1}  e^{-t \al_\ep s^\ep_{n-1} (z)}  s^{\ep-1}_{n-1} (z)r^{-1}_{n-2}(z) \Big)
  \leq  \frac12  z^{-1}  e^{-t \al_\ep s^\ep_{n-1} (z)}  s^{\ep-1}_{n-1} (z) r^{-1}_{n-2}(z), \qquad z \geq  \ln (1 +a_0).
\end{align*}
Since $s_{n-1}( \ln \, (1 +a)) = s_n(a)  $ and $\ln\, (1 +a)    r_{n-2}(\ln \, (1 +a)) = r_{n-1}(a)  $, we have
\begin{align*}
\int_{a_0}^a (z+1)^{\al} e^{-t \al_\ep s^\ep_n (z)} s^{\ep-1} _n (z) r^{-1}_{n-1}(z)  dz
&\leq  2(1 + \al)^{-1} e^{(1 +\al)z} z^{-1}  e^{-t \al_\ep s^\ep_{n-1} (z)}  s^{\ep-1}_{n-2} (z)r^{-1}_{n-2}(z)\Big|_{{\ln \,(1 +a_0)}}^{\ln \,(1 +a)}\\
&\leq  c_\al a^{1 +\al}   e^{-t \al_\ep s^\ep_n (a)} s^{\ep-1}_n (a) r^{-1}_{n-1}(a).
\end{align*}
Hence, we complete the proof of (2) of the Lemma \ref{mainlemma2}.

For (3) of Lemma \ref{mainlemma2}, using the change of variables and integration by parts sequentially, we get
\begin{align*}
\int_a^\infty (z+1)^{\al}e^{-t \al_\ep s^\ep_n (z)} s^{\ep-1}_{n} (z) r^{-1}_{n-1}(z)  dz
&= \int_{\ln \,(1 + a)}^\infty e^{(1 +\al)z} z^{-1} e^{-t \al_\ep s^\ep_{n-1} (z)}  s^{\ep-1}_{n-1} (z) r^{-1}_{n-2}(z)  dz\\
& =(1 + \al)^{-1} e^{(1 +\al)z} z^{-1}  e^{-t  \al_\ep s^\ep_{n-1} (z)}  s^{\ep-1}_{n-1} (z) r^{-1}_{n-2}(z)\Big|_{\ln \,(1 + a)}^\infty\\
& \qquad - (1 + \al)^{-1} \int_{\ln \,(1 + a)}^\infty e^{(1 +\al)z}  \frac{d}{dz}\Big( z^{-1} e^{-t \al_\ep s^\ep_{n-1} (z)}  s^{\ep-1}_{n-1} (z) r^{-1}_{n-2}(z) \Big) dz.
\end{align*}
Since $z^{-1}  e^{-t  \al_\ep s^\ep_{n-1} (z)}  s^{\ep-1}_{n-1} (z) r^{-1}_{n-2}(z)$ is a decreasing function, $\frac{d}{dz}\Big( z^{-1}  e^{-t \al_\ep  s^\ep_{n-1} (z)}  s^{\ep-1}_{n-1} (z) r^{-1}_{n-2}(z) \Big)$ is a
non-positive function. Since $\al < -1$,  we get
\begin{align*}
\int_a^\infty (z+1)^{\al}e^{-t \al_\ep s^\ep_n (z)}   s^{\ep-1}_{n} (z)r^{-1}_{n-1}(z)  dz
&\leq
 (1 + \al)^{-1} e^{(1 +\al)z} z^{-1}  e^{-t\al_\ep s^\ep_{n-1} (z)}  s^{\ep-1}_{n-1} (z) r^{-1}_{n-2}(z)\Big|_{\ln \,(1 + a)}^\infty\\
&  = - (1 + \al)^{-1} (1 +a)^{1 + \al} e^{-t \al_\ep s^\ep_n (z)}  s^{\ep-1}_{n} (z) r^{-1}_{n-1}(a)\\
&  \leq  - (1 + \al)^{-1} a^{1 + \al} e^{-t\al_\ep  s^\ep_n (z)}  s^{\ep-1}_{n} (z) r^{-1}_{n-1}(a).
\end{align*}
Hence, we complete the proof of (3) of Lemma \ref{mainlemma2}.

\end{proof}

\section{Proof of Theorem \ref{Emaintheo}} \label{sec7}
\setcounter{equation}{0}
In this section, we prove Theorem \ref{Emaintheo}.

Let
\begin{align}\label{example1}
\notag g_1(\xi):& = \al_\ep s_n^{\ep}(\xi),\\
\notag g_2(\xi) : & =c \left\{\begin{array}{l} \vspace{2mm}
|\xi|^\ep, \quad |\xi| \leq 1,\\
s_n^{\ep-1} (\xi) \frac1{1 + r_{n-1}(\xi)}, \quad |\xi| \geq 1,
\end{array}
\right.\\
\notag g_3(\xi) : & =  c \left\{\begin{array}{l} \vspace{2mm}
   |\xi|^{\ep -1}, \quad |\xi| \leq 1,\\
 s_n^{\ep -1}(\xi) r^{-1}_{n-1}(\xi) (1 + |\xi|)^{-1}, \quad |\xi| \geq 1,
  \end{array}
  \right.\\
g_4(\xi): & = c
  \left\{\begin{array}{l}\vspace{2mm}
   |\xi|^{\ep -2}, \quad |\xi| \leq 1,\\
  s_n^{\ep -1}(\xi) r^{-1}_{n-1}(\xi)    (1 + |\xi|)^{-2}, \quad |\xi| \geq 1.
 \end{array}
 \right.
\end{align}
From the Lemma \ref{mainlemma}, it is sufficient to estimate the following;
\begin{align*}
  \frac1{|x|^2} \int_{\frac{2\pi}{|x|}}^{\infty} G(\xi) d\xi,\quad  \frac{t}{|x|}  \int_0^{\frac12 \pi}  e^{-t g_1(\frac{\xi}{x})}  g_2(\frac{\xi}{x}) \xi,\quad
 \frac1{|x|^3}      G(\frac1x),\quad      \int_0^{\frac{\pi}{2x}} G(\xi)   \xi^2 d\xi
\end{align*}
with $ G(\xi) :=  e^{-tg_1 (\xi)}\big(t^2 g_3(\xi)^2 + t g_4(\xi)   \big)$.
Note that, by the assumption  \eqref{mainassumption} of $\eta_1$, we have
\begin{align*}
e^{-t\eta_1(\xi)} \leq  e^{-tg_1(\xi)}=    e^{- \al_\ep t   s^{\ep}_n(\xi)}
\end{align*}
and
\begin{align*}
G(\xi)  \leq  c \left\{\begin{array}{l}\vspace{4mm}
e^{- \al_\ep t   s^\ep_n(\xi)}\big(t^2 |\xi|^{2\ep -2} + t |\xi|^{\ep -2}\big), \quad |\xi| \leq 1,\\
t(t+1)  e^{-t \al_\ep   s^\ep_n(\xi)}          s^{\ep-1}_n(\xi) r_{n-1}^{-1}(\xi) (1 + |\xi|)^{-2}, \quad |\xi| \geq 1.
\end{array}
\right.
\end{align*}
{\bf (1) In the case of $|x| \geq 1$.}\\
By  direct calculation, we have
\begin{align}\label{first}
\notag \frac1{|x|^2} \int_{\frac{2\pi}{|x|}}^{\infty} G(\xi) d\xi & \leq c  \frac1{|x|^2}\int_1^\infty
t(t +1) e^{-t  \al_\ep  s^\ep_n(\xi)}          s^{\ep-1}_n(\xi) r_{n-1}^{-1}(\xi) (1 + |\xi|)^{-2} d\xi\\
\notag &\quad  + \frac1{|x|^2}\int_{\frac{2\pi}{|x|}}^1    e^{-t \al_\ep  s^\ep_n(\xi)}\big(t^2 |\xi|^{2\ep -2} + t |\xi|^{\ep -2}\big)   d\xi\\
          & \leq   c t(t+1) \left\{ \begin{array}{l} \vspace{2mm}
          \frac1{|x|^{1 +\ep}}, \quad 0< \ep <1,\\
          \frac{\ln (1 + |x|)}{|x|^2}, \quad \ep =1.
          \end{array}
          \right.
\end{align}
Using the change of variables, we get
\begin{align}\label{second}
\notag \frac{t}{|x|}  \int_0^{\frac12 \pi}  e^{-t g_1(\frac{\xi}{x})}  g_2(\frac{\xi}{x}) d\xi
\notag & \leq c  t \int_0^{\frac{ \pi}{2|x|}} e^{-t g_1(\xi )}  g_2(\xi) d\xi \\
\notag & \leq c  t \int_0^{\frac{ \pi}{2|x|}} e^{- \al_\ep t    s^\ep_n(\xi)}  |\xi|^\ep d\xi \\
& \leq c t\frac1{ |x|^{1 + \ep }}.
\end{align}
Since $\frac1{|x|} \leq 1$, we have
\begin{align}\label{third}
\notag \frac1{|x|^3}      G(\frac1x)
  & \leq  c \frac1{|x|^3}    e^{- \al_\ep t    s^\ep_n(\frac1x)} \big(t^2 |\frac1x|^{2\ep -2} + t |\frac1x|^{\ep -2}\big)\\
\notag & \leq  c  t(t+1)\frac1{|x|^3}     |\frac1x|^{\ep -2}\\
& \leq  c  t(t+1)\frac1{|x|^{1 +\ep}}
\end{align}
and
\begin{align}\label{firth}
 \int_0^{\frac{\pi}{2|x|}} G(\xi)   \xi^2 d\xi & \leq  ct (t+1) \int_0^{\frac{\pi}{2|x|}}  \xi^\ep d\xi = ct (t+1) \frac1{|x|^{1 +\ep}}.
\end{align}
Hence, by \eqref{first}, \eqref{second}, \eqref{third}, \eqref{firth} and  Lemma \ref{mainlemma}, Theorem \ref{Emaintheo} holds for $|x|\geq 1$.

{\bf (1) In the case of $|x| \leq 1$.}

Note that for  $ |x| \leq 1$, taking $a = \frac{2\pi}{|x|}$ and $\al =-2$ in (3) of  Lemma \ref{mainlemma2}, we have
\begin{align}\label{first-2}
\notag \frac1{|x|^2} \int_{\frac{2\pi}{|x|}}^{\infty} G(\xi) d\xi
 & \leq  c t(t+1)  \frac1{|x|^2} \int_{\frac{2\pi}{|x|}}^{\infty}   t(t+1)  e^{- \al_\ep t    s^\ep_n(\xi)}          s^{\ep-1}_n(\xi) r_{n-1}^{-1}(\xi) (1 + |\xi|)^{-2}  d\xi\\
 \notag & \leq  c(t)   \frac1{|x|^2}    t(t+1)  e^{- \al_\ep t    s^\ep_n(\frac1x)}          s^{\ep-1}_n(\frac1x) r_{n-1}^{-1}(\frac1x)   (1 + |\frac1x|)^{-1}\\
 &  \leq c(t)  \frac1{|x|}    t(t+1)  e^{-\al_\ep t    s^\ep_n(\frac1x)}          s^{\ep-1}_n(\frac1x) r_{n-1}^{-1}(\frac1x).
\end{align}
with $c(t) \leq ct$ for $t \leq 1$. By (1) of Lemma \ref{mainlemma2}, we have
\begin{align}\label{second-2}
\notag   \int_0^{\frac{\pi}{2|x|}} G(\xi)  \xi^2 d\xi & \leq ct(t+1)   \int_0^1 |\xi|^{\ep -2}  d\xi
  +  ct(t+1)    \int_1^{\frac{\pi}{2|x|}}   e^{-t \al_\ep     s^\ep_n(\xi)}          s^{\ep-1}_n(\xi) r_{n-1}^{-1}(\xi)    d\xi \\
&\leq c(t)    \frac1{|x|}    t(t+1)  e^{- t\al_\ep     s^\ep_n(\frac1x)}          s^{\ep-1}_n(\frac1x) r_{n-1}^{-1}(\frac1x).
\end{align}
Moreover, if $t \leq 1$, then by (1) of Lemma \ref{mainlemma2}, we have
\begin{align}\label{second-2-2}
\notag   \int_0^{\frac{\pi}{2x}} G(\xi) \xi^2 d \xi &\leq ct(t+1)   \int_0^{a_0} |\xi|^{\ep} d \xi
                  +ct(t+1)   \int_{a_0}^{\frac{\pi}{2x}}   e^{- \al_\ep t    s^\ep_n(\xi)}          s^{\ep-1}_n(\xi) r_{n-1}^{-1}(\xi)   d\xi\\
&\leq c(t)    \frac1{|x|}      e^{-t   \al_\ep  s^\ep_n(\frac1x)}          s^{\ep-1}_n(\frac1x) r_{n-1}^{-1}(\frac1x),
\end{align}
where $a_0$ is a constant defined in Lemma \ref{mainlemma2}. By direct calculation, we get
\begin{align}\label{third-2}
  \frac{1}{|x|^3}G(\frac1x)
& \leq   c t(t+1)    \frac1{|x|}      e^{-t  \al_\ep   s^\ep_n(\frac1x)}          s^{\ep-1}_n(\frac1x) r_{n-1}^{-1}(\frac1x).
\end{align}
By (2) of Lemma \ref{mainlemma2}, we have
\begin{align}\label{firth-2}
\notag \frac{t}{|x|} \int_0^{\frac12 \pi}  e^{-t g_1(\frac{\xi}{x})}  g_2(\frac{\xi}{x}) d\xi
& \leq c t  \int_0^1    |\xi|^\ep    d\xi
  + c t \int_1^{\frac{\pi}{2|x|}}       e^{- \al_\ep t    s^\ep_n(\xi)}          s^{\ep-1}_n(\xi) r_{n-1}^{-1}(\xi)       d\xi \\
  &\leq c(t)       \frac1{|x|}    t(t+1)  e^{-t   \al_\ep  s^\ep_n(\frac1x)}          s^{\ep-1}_n(\frac1x) r_{n-1}^{-1}(\frac1x).
\end{align}
Hence, from \eqref{first-2} to \eqref{firth-2}, and  Lemma \ref{mainlemma}, Theorem \ref{Emaintheo} holds for $|x|\leq 1$.
$\Box$

\begin{rem}\label{rem1127}
Note that $p_t \in L^1({\mathbb R})$. In fact, using the change of variables ($ x = y^{-1}$), we get
\begin{align*}
\int_{|x| < 1}  \frac1{|x|}      e^{-\al_\ep t    s^\ep_n(\frac1x)}          s^{\ep-1}_n(\frac1x) r_{n-1}^{-1}(\frac1x) dx
&= 2 \int_0^1  \frac1{x}      e^{-\al_\ep t    s^\ep_n(\frac1x)}          s^{\ep-1}_n(\frac1x) r_{n-1}^{-1}(\frac1x) dx\\
&= 2 \int_1^\infty   \frac1{x}      e^{-\al_\ep t    s^\ep_n(x)}          s^{\ep-1}_n(x) r_{n-1}^{-1}(x) dx.
\end{align*}
Use the change of variables ($\ln \, (x +1) =y $) again, we get
\begin{align*}
&\leq c \int_{\ln \, 2}^\infty   \frac1{x}      e^{-\al_\ep t    s^\ep_{n-1}(x)}          s^{\ep-1}_{n-2}(x) r_{n-2}^{-1}(x) dx\\
& =  \cdots \\
&  \leq c \int_{s_{n-1}(1)}^\infty  \frac1{x}      e^{-\al_\ep t   (\ln \, (x +1))^\ep }          (\ln \, (x +1))^{\ep-1}_{n-2}   dx\\
& \leq c \int_{s_{n}(1)}^\infty         e^{-\al_\ep t   x^\ep }          x^{\ep-1}  dx\\
& < \infty.
\end{align*}

\end{rem}

\section{Proof of  Theorem \ref{eta2=0}} \label{sec5}
Since $\eta = \eta_1$ and  $\eta$ is symmetric, $\int_{{\mathbb R}} e^{-t\eta(\xi)} \sin \, (x\xi) d\xi =0  $. Hence,
 the inverse Fourier transform of $ f : = e^{-t\eta}$ is real and
\begin{align*}
  \int_{{\mathbb R}} e^{-t \eta(\xi) } e^{-i x\xi} d\xi
  & = \int_{{\mathbb R}} e^{-t\eta(\xi)} \cos \, (x\xi) d\xi\\
   & =\frac1{|x|} \int_{{\mathbb R}} e^{-t\eta(\frac{\xi}x)} \cos \, \xi d\xi\\
   &= \sum_{-\infty < k<\infty} \frac1{|x|} \int^{2(k+1) \pi}_{2k \pi} f(\frac{\xi}x) \cos \, \xi d\xi.
\end{align*}

Note that
\begin{align*}
f^{''}(\xi) &= e^{-t\eta(\xi)} \Big( (-t\eta^{'} (\xi) )^2    +  (-t\eta^{''} (\xi) )   \Big)\\
& \geq  e^{-t\eta(\xi)}  (-t\eta^{''} (\xi) )\\
& \geq  ct \left\{\begin{array}{l}\vspace{4mm}
  e^{- c_1 t  }  |\xi|^{\ep -2}, \quad |\xi| \leq 1,\\
  e^{-t \al_0   s^\ep_n(\xi)}          s^{\ep-1}_n(\xi) r_{n-1}^{-1}(\xi) (1 + |\xi|)^{-2}, \quad |\xi| \geq 1
\end{array}
\right.\\
& : = g(|\xi|),
\end{align*}
where $c_1$ is a  positive constant such that $g$ is decreasing.
Then, for $k \in {\mathbb Z}$, as the proof of  Lemma \ref{mainlemma}, there are $\ep_1 \in (\xi, \pi -\xi), \, \ep_2 \in (\pi + \xi, 2\pi -\xi)$ and
$\ep_3 \in (\ep_1, \ep_2)$ such that
\begin{align*}
\frac1{|x|} \int^{2(k+1) \pi}_{2k \pi} f(\frac{\xi}x) \cos \, \xi d\xi
& =  \frac{1}{|x|^3 }    \int^{\frac12 \pi}_0  (\pi -2\xi)(\ep_2 - \ep_1)f^{''} (\frac{2 \pi k + \ep_3}{x}) \cos \xi d\xi \\
& \geq \frac{1}{|x|^3 }    \int^{\frac12 \pi}_0  (\pi -2\xi)(\ep_2 - \ep_1)g (\frac{|2 \pi k + \ep_3|}{|x|}) \cos \xi d\xi\\
& \geq \frac{1}{|x|^3 }    \int_{\frac18\pi}^{\frac38 \pi} (\pi -2\xi)(\ep_2 - \ep_1)g (\frac{|2 \pi k + \frac18\pi|}{|x|}) \cos \xi d\xi\\
& \geq  c \frac{1}{|x|^3 }    g (\frac{|2 \pi k + \frac18\pi|}{|x|}).
\end{align*}
Hence, we have
\begin{align*}
 \int_{{\mathbb R}} e^{-t \eta(\xi) } e^{-i x\xi} d\xi
    &= \sum_{-\infty < k<\infty} \frac1{|x|} \int^{2(k+1) \pi}_{2k \pi} f(\frac{\xi}x) \cos \, \xi d\xi\\
    & \geq c  \sum_{0 \leq  k<\infty} \frac{1}{|x|^3 }    g (\frac{|2 \pi k + \frac18\pi|}{|x|})\\
    & \geq c  \frac{1}{|x|^2 }\int_{\frac{1}{2\pi|x|}}^\infty     g (y) dy.
\end{align*}
Hence, for $|x| \geq 1$,  we have
\begin{align*}
\int_{{\mathbb R}} e^{-t \eta(\xi) } e^{-i x\xi} d\xi
    & \geq c  \frac{t}{|x|^2 }\int_1^\infty     e^{-t \al_0   s^\ep_n(\xi)}          s^{\ep-1}_n(\xi) r_{n-1}^{-1}(\xi) |\xi|^{-2} d\xi
    + c  \frac{t}{|x|^2 } \int_{\frac1{|x|}}^1  e^{- c_1 t  }  |\xi|^{\ep -2}  d\xi \\
    & \geq c  \left\{\begin{array}{l} \vspace{2mm}
       \frac{t}{|x|^2 } +    t e^{-c_1 t} \frac{1}{|x|^{2-\ep} },\quad  0 < \ep <1,\\
        \frac{t}{|x|^2 }  (1 + e^{-c_1 t} \ln (1 + |x|)  ), \quad \ep =1.
     \end{array}
     \right.
\end{align*}
Applying (3) of Lemma \ref{mainlemma2}, for $\al =-2$ and $a = \frac1{2\pi|x|}$ for $|x| \leq 1$, we have
\begin{align*}
\int_{{\mathbb R}} e^{-t  \eta(\xi) } e^{-i x\xi} d\xi \geq c(t)       \frac1{|x|}    e^{-t  \al_0  s^\ep_n(\frac1x)}          s^{\ep-1}_n(\frac1x) r_{n-1}^{-1}(\frac1x).
\end{align*}

\section{Examples}\label{examples}
\setcounter{equation}{0}

In this section, we show that the L$\acute{\mbox{e}}$vy symbol induced from the Laplacian exponent $\psi^{\ep,n}(\xi) = (s_n (\xi))^\ep , \,\, 0 < \ep \leq 1, \,\, 1\leq n $ satisfies  assumption \eqref{mainassumption}. We also show that   $\psi^{\ep,n}$  satisfies  assumption \eqref{1001eta2=0}.

(1). We show that the L$\acute{\mbox{e}}$vy symbol induced from the Laplacian exponent $\psi^{\ep,n}$ satisfies  assumption \eqref{mainassumption}.

Clearly, $\psi^{\ep, n}$ is a Bernstein function and so $\psi^{\ep,n}$ is a Laplace exponent of some subordinators.
First, we consider in the case of $\ep =1$ and $n =1$.  Let $\psi^{1,1} := \psi^1$.  By \eqref{bernstein}, we have
\begin{align*}
\eta^1(\xi) &= \psi^1(-i\xi) =   \ln(1 -i \xi) \\
 & =  \frac12 \ln (1 + \xi^2) - iTan^{-1} \xi.
\end{align*}
Hence,   we have
\begin{align*}
\eta^1_1(\xi) & = \frac12 \ln(1 + \xi^2) ,\\
\eta^1_2(\xi) &=  Tan^{-1} \xi.
\end{align*}
It is easy to show that $\eta^1_1$ and $\eta^1_2$ satisfy the assumption in  Theorem \ref{Emaintheo} for $n =1$.
Using the mathematical induction. Suppose that $s_n(\xi)$ satisfies the assumption of Theorem \ref{Emaintheo}. Note that
\begin{align*}
\eta^{n+1} (\xi)
 = \psi^{n+1}(-i\xi) & =   \ln(1 + s_n(-i \xi) )\\
& = \ln ( 1 + \eta^n_1(\xi) - i \eta^n_2(\xi) )\\
& = \frac12 \ln  \big ( (1 + \eta^n_1(\xi))^2 + (\eta^n_2 (\xi) )^2 \big) - i Tan^{-1} \frac{\eta^n_2(\xi) }{1 + \eta^n_1(\xi)}.
\end{align*}
Hence, we get
\begin{align*}
\eta^{n+1}_1 (\xi) &= \frac12 \ln  \big ( (1 + \eta^n_1(\xi))^2 + (\eta^n_2 (\xi) )^2 \big),\\
\eta^{n+1}_2 (\xi) &=  Tan^{-1} \frac{\eta^n_2(\xi) }{1 + \eta^n_1(\xi)}.
\end{align*}
Under the assumption that $\eta^n_1$ and $\eta^n_2$ satisfies  \eqref{mainassumption}, it is easy to show that $\eta^{n+1}_1$ and $\eta^{n+1}_2$
satisfy \eqref{mainassumption}. Hence, by mathematical induction, $s_n$ satisfies \eqref{mainassumption} for all $n \geq 1$.

Next,  let $0 < \ep < 1$.
By \eqref{bernstein}, we have
\begin{align*}
\eta^{n+1}_\ep(\xi)&  = ( s_{n}(-i\xi))^\ep = \ln(1  + s_{n-1}(-i\xi))^\ep\\
& =  \ln(1  + \eta^n_{ 1}(\xi) - i \eta^n_{ 2}(\xi))^\ep \\
 & = \Big(\frac12 \ln \big( (1 +\eta^n_{ 1}(\xi) )^2 + (\eta^n_{ 2}(\xi) )^2  \big)  - iTan^{-1} \frac{\eta^n_{ 2}(\xi) }{  (1 +\eta^n_{ 1}(\xi))} \Big)^\ep \\
 & = e^{\ep \ln \Big(\frac12 \ln \big( (1 + \eta^n_{ 1}(\xi) )^2 + (\eta^n_{ 2}(\xi) )^2  \big)  - iTan^{-1} \frac{\eta^n_{ 2}(\xi) }{  (1 +\eta^n_{ 1}(\xi))} \Big)}\\
  & = e^{\frac12 \ep \ln \Big( \big(  \frac12 \ln \big( (1 + \eta^n_{ 1}(\xi))^2 + ( \eta^n_{ 2}(\xi) )^2  \big) \big)^2
                 + \big(Tan^{-1} \frac{\eta^n_{ 2}(\xi) }{  (1 +\eta^n_{ 1}(\xi))}   \big)^2 \Big)
                 - i \ep Tan^{-1} \frac{Tan^{-1} \frac{\eta^n_{ 2}(\xi) }{  (1 +\eta^n_{ 1}(\xi))} }{  \big(  \frac12 \ln \big( (1 + \eta^n_{ 1}(\xi))^2 + ( \eta^n_{ 2}(\xi))^2  \big) \big)} }.
\end{align*}
Hence,we get
\begin{align*}
\eta^{n+1}_{\ep 1}(\xi)& = \Big( \big(  \frac12 \ln \big( (1 + \eta^n_{ 1}(\xi) )^2 + (\eta^n_{ 2}(\xi))^2  \big) \big)^2
                 + \big(Tan^{-1} \frac{\eta^n_{ 2}(\xi) }{  (1 +\eta^n_{ 1}(\xi) )}   \big)^2 \Big)^{\frac12 \ep} \cos \ep Tan^{-1} \frac{Tan^{-1} \frac{\eta^n_{ 2}(\xi) }{  (1 +\eta^n_{ 1}(\xi) )} }{  \big(  \frac12 \ln \big( (1 + \eta^n_{ 1}(\xi) )^2 + (\eta^n_{ 2}(\xi) )^2  \big) \big)},\\
\eta^{n+1}_{\ep 2}(\xi) &= \Big( \big(  \frac12 \ln \big( (1 + \eta^n_{ 1}(\xi) )^2 + (\eta^n_{ 2}(\xi))^2  \big) \big)^2
                 + \big(Tan^{-1} \frac{\eta^n_{ 2}(\xi) }{  (1 +\eta^n_{ 1}(\xi) )}   \big)^2 \Big)^{\frac12 \ep} \sin  \ep Tan^{-1} \frac{Tan^{-1} \frac{\eta^n_{ 2}(\xi) }{  (1 +\eta^n_{ 1}(\xi) )} }{  \big(  \frac12 \ln \big( (1 + \eta^n_{ 1}(\xi) )^2 + (\eta^n_{ 2}(\xi))^2  \big) \big)}.
\end{align*}
Since $\eta^n_{ 1}(\xi) $ and $\eta^n_{ 2}(\xi)$ satisfy \eqref{mainassumption}, it is easy to show that $\eta^n_{\ep 1}(\xi)$ and $\eta^n_{\ep 2}(\xi)$ satisfy \eqref{mainassumption}.

(2). Second, we show that   $\psi^{\ep,n}(\xi) = (s_n (\xi))^\ep$  satisfies  assumption \eqref{1001eta2=0}.

First, we consider  the case of $\ep =1$.
By  direct calculus, we have
\begin{align*}
(\psi^{1,n}(\xi) )^{'} = s_n^{'} (\xi)  =A_{n-1}(\xi),\quad
(\psi^{1,n}(\xi) )^{''}=s_n^{''} (\xi)  = - \sum_{1\leq k \leq n-1} B_k(\xi),
\end{align*}
where for $1 \leq k \leq n-1$,
\begin{align*}
A_{n-1}(\xi): & =(1 + s_{n-1} (\xi))^{-1} (1 + s_{n-2} (\xi) )^{-1} \cdots (1 + s_1 (\xi))^{-1} (1 + \xi)^{-1},\\
 B_k(\xi):& = A_{n-1}(\xi) A_k(\xi),\quad 1 \leq k \leq n-1,\\
 B_0(\xi): &= A_{n-1}(\xi) (1 + \xi)^{-1}.
\end{align*}
Since $B_k(\xi) >0 $   for all $0 \leq k \leq n-1$, we have
\begin{align*}
-(\psi^{1,n}(\xi) )^{''}=   \sum_{1\leq k \leq n-1} B_k(\xi) > B_0(\xi)  \geq c r^{-1}_{n-1}(\xi) (1 + \xi)^{-2}.
\end{align*}

Next, we consider the case of  $0 < \ep <1$.
By direct calculus, we have
\begin{align*}
-(\psi^{\ep,n}(\xi) )^{''} & = - (\ep -1) s^{\ep-2}_n(\xi) A^2_{n-1}(\xi) + s^{\ep-1}_n(\xi) \sum_{1\leq k \leq n-1} B_k(\xi)\\
  & \geq s_{n-1}^{\ep -1}(\xi) B_0(\xi)\\
   & \geq c s_{n-1}^{\ep -1}(\xi) r^{-1}_{n-1}(\xi) (1 + \xi)^{-2}.
\end{align*}
$\Box$

\end{document}